\documentclass[12pt]{amsart}
\usepackage[all]{xy}
\usepackage[parfill]{parskip}

\usepackage{verbatim}
\usepackage{color}

\usepackage{amsmath, amscd, graphicx, latexsym, hyperref, times}
\usepackage{color,graphicx}
\usepackage[abs]{overpic}
\usepackage{tikz}
\usepackage{tikz-cd}
\usetikzlibrary{arrows, patterns}

\textwidth 6in \textheight 7.6in \evensidemargin .25in

\oddsidemargin .25in \theoremstyle{plain}
\newtheorem{theorem}{Theorem}

\newtheorem{lemma}{Lemma}

\newtheorem{conjecture}{Conjecture}

\newtheorem{remark}{Remark}

\numberwithin{equation}{section}
\numberwithin{lemma}{section}
\numberwithin{proposition}{section}
\numberwithin{corollary}{section}
\numberwithin{remark}{section}
\allowdisplaybreaks[3]


\newcommand{\dd}{\mathop{}\!\mathrm{d}}
\usepackage{cleveref}
\crefname{theorem}{Theorem}{Theorem}
\crefname{lemma}{Lemma}{Lemma}
\crefname{proposition}{Proposition}{Proposition}
\crefname{conjecture}{Conjecture}{Conjecture}
\crefname{equation}{}{}
\crefname{section}{Section}{Section}
\crefname{figure}{Figure}{Figure}
\usepackage{appendix}
\begin{document}

\title[Uniqueness results on manifolds with boundary]{Uniqueness results for positive harmonic functions on manifolds with nonnegative Ricci curvature and strictly convex boundary}
\author{Xiaohan Cai}
\address{School of Mathematical Sciences, Shanghai Jiao Tong University, Shanghai 200240, China}
\email{xiaohancai@sjtu.edu.cn}

\thanks{}
\date{}

\begin{abstract}
We prove some Liouville-type theorems for positive harmonic functions on compact Riemannian manifolds with nonnegative Ricci curvature and strictly convex boundary, thereby confirming some cases of Wang's conjecture (J. Geom. Anal. \textbf{31}, 2021). 

We further investigate Wang's conjecture on warped product manifolds and provide a partial verification of this conjecture,
which also yields an alternative proof of Gu-Li's resolution of the conjecture in the $\mathbb{B}^n$ case (Math. Ann. \textbf{391}, 2025). Our approach is based on a general principle of employing the P-function method to such Liouville-type results, with particular emphasis on the role of a closed conformal vector field inherent to such manifolds. 
\vspace{1em}

\noindent\textbf{Mathematics Subject Classification} 35B33, 35J91, 58J90
\end{abstract}
\maketitle
\section{Introduction}
Motivated by Bidaut-V\'eron and V\'eron's rigidity work \cite[Theorem 6.1]{BVV91} (see also \cite{Ili96}), Wang proposed a conjecture for a Liouville-type result for harmonic functions with some nonlinear boundary condition \cite[Conjecture 1]{Wan21}.  
\begin{conjecture}[Wang, \cite{Wan21}]\label{conj. Wang}
    Let $(M^n,g)\ (n\geq 3)$ be a compact Riemannian manifold with $\mathrm{Ric}\geq 0$ and the second fundamental form $\Pi\geq 1$ on $\partial M$. If $u\in C^{\infty}(M)$ is a positive solution of the following equation
    \begin{align}\label{eq. equation of u on M}
        \begin{cases}
            \Delta u=0  &\text{in }M^n,\\
            \frac{\partial u}{\partial \nu}+\lambda u=u^q &\text{on }\partial M^n,
        \end{cases}
    \end{align}
    where $1<q\leq \frac{n}{n-2}$ and $0<\lambda\leq \frac{1}{q-1}$ are constants. Then either $u$ is a constant function, or $q=\frac{n}{n-2},\, \lambda=\frac{n-2}{2}$, $(M^n,g)$ is isometric to the unit ball $\mathbb{B}^n\subset\mathbb{R}^n$ and $u$ corresponds to
    \begin{align}\label{eq. model solution}
            u(x)=\left(
            \frac{n-2}{2}\frac{1-|a|^2}{|a|^2|x|^2-2\langle x,a\rangle+1}
            \right)^{\frac{n-2}{2}},
        \end{align}
        where $a\in \mathbb{B}^n$.
\end{conjecture}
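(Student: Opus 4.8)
The plan is to run the P-function (auxiliary-function) method, exploiting that the nonlinearity lives entirely on the boundary while the interior equation is simply $\Delta u=0$. As the P-function I would take
\[
P = u^{-a}\,|\nabla u|^2 + b\,u^{c},
\]
with constants $a,b,c$ to be pinned down from $q,\lambda,n$. Since $u$ is harmonic, expanding $\Delta P$ and applying the Bochner formula
\[
\tfrac{1}{2}\Delta|\nabla u|^2 = |\nabla^2 u|^2 + \mathrm{Ric}(\nabla u,\nabla u)
\]
together with the refined Kato inequality $|\nabla^2 u|^2\geq \frac{n}{n-1}\,|\nabla|\nabla u||^2$ (valid for harmonic functions) and $\mathrm{Ric}\geq 0$, I would choose $a,b,c$ so that $P$ satisfies a linear elliptic inequality $\Delta P + \langle V,\nabla P\rangle \geq 0$ for some vector field $V$, i.e.\ $P$ is a subsolution. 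By the maximum principle the maximum of $P$ is then attained on $\partial M$.

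The decisive step is the boundary computation. For a tangential frame $\{e_i\}$ on $\partial M$, the boundary condition $u_\nu + \lambda u = u^{q}$ (with $u_\nu=\partial u/\partial\nu$) and the Weingarten relation give the mixed Hessian
\[
\nabla^2 u(e_i,\nu) = (q\,u^{q-1}-\lambda)\,u_i - \Pi(e_i,\nabla^{\partial M} u),
\]
so that, decomposing $\nabla u = u_\nu\,\nu + \nabla^{\partial M}u$, the outward normal derivative $\partial P/\partial\nu$ assembles into an algebraic expression in $u$, $u_\nu$, $|\nabla^{\partial M}u|^2$ and curvature data, the genuinely geometric ingredients being $-\Pi(\nabla^{\partial M}u,\nabla^{\partial M}u)$ and a mean-curvature term arising from $\nabla^2 u(\nu,\nu)=-\Delta^{\partial M}u - H\,u_\nu$. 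Here $\Pi\geq 1$ enters through $\Pi(\nabla^{\partial M}u,\nabla^{\partial M}u)\geq |\nabla^{\partial M}u|^2$, and the constraints $1<q\leq\frac{n}{n-2}$, $0<\lambda\leq\frac{1}{q-1}$ are exactly what should fix the sign $\partial P/\partial\nu\leq 0$ at the boundary maximum. With $P$ subharmonic and $\partial P/\partial\nu\leq 0$ there, the Hopf lemma forces $P$ to be constant.

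Constancy of $P$ then propagates equality through every inequality used. Equality in Kato forces the Hessian into its extremal rank-one-plus-trace form along $\nabla u$, while $\mathrm{Ric}(\nabla u,\nabla u)=0$ and $\Pi(\nabla^{\partial M}u,\nabla^{\partial M}u)=|\nabla^{\partial M}u|^2$ (total umbilicity at the extremal value $\Pi\equiv 1$). This is precisely the situation in which the manifold carries a \emph{closed conformal vector field}: the extremal Hessian of $u$, together with the umbilic boundary, produces a field $Y$ with $\nabla_Z Y = \psi\,Z$, whose flow should identify $(M,g)$ with the flat unit ball $\mathbb{B}^n$. Tracking the boundary identities forces the critical values $q=\frac{n}{n-2}$, $\lambda=\frac{n-2}{2}$, and integrating the resulting ODE for $u$ along the conformal field yields the explicit solution \eqref{eq. model solution}; in the subcritical range the inequalities are strict, so $P$ cannot be constant unless $\nabla u\equiv 0$, i.e.\ $u$ is constant.

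The main obstacle is the boundary step. The term $\Delta^{\partial M}u$ inside $\nabla^2 u(\nu,\nu)$ carries no pointwise sign, so securing $\partial P/\partial\nu\leq 0$ pointwise requires either that the correction $b\,u^{c}$ be tuned to absorb it using the fact that the maximum of $P$ is a critical point of $P|_{\partial M}$, or that the argument be recast integrally through a Reilly-type identity. Obtaining a definite boundary sign across the \emph{entire} admissible range of $(q,\lambda)$ — not merely at the critical endpoint — and then upgrading the equality analysis into the reconstruction of $\mathbb{B}^n$ for a \emph{general} manifold (rather than a warped product, where the closed conformal field is present from the start) is where I expect the real difficulty to lie, and is presumably why the conjecture remains open beyond the cases treated here.
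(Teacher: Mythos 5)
The statement you are trying to prove is \cref{conj. Wang}, which is an \emph{open conjecture}: the paper does not prove it, and only establishes special cases — \cref{thm. C. Ricci} under substantially restricted ranges of $(n,q,\lambda)$, and \cref{thm. C. warped product} under the additional hypothesis that $(M^n,g)$ is a warped product over $\mathbb{S}^{n-1}$. So there is no "paper's own proof" to match your attempt against, and any complete argument you produce for the full conjecture would be a major new result, not a reconstruction.

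Your proposed route — engineering $P=u^{-a}|\nabla u|^2+bu^c$ into a pointwise subsolution and closing the argument with the Hopf lemma at a boundary maximum — is precisely the strategy that the paper flags as working only in dimension two (Wang's earlier resolution of the $n=2$ case) and as "seemingly fail[ing] to work when $n\geq 3$." The two fatal gaps are the ones you yourself half-acknowledge. First, the boundary sign: $\nabla^2u(\nu,\nu)=-\Delta^{\partial M}u-Hu_\nu$ injects $\Delta^{\partial M}u$ into $\partial P/\partial\nu$, and at a maximum of $P|_{\partial M}$ you only control the tangential gradient of $P$, not the tangential Laplacian of $u$; no choice of $b,u^c$ absorbs this pointwise, which is exactly why the paper (and Guo--Hang--Wang, Gu--Li) retreat to \emph{integral} identities — multiplying the Bochner identity by $v^b$, integrating, and combining two identities with a free parameter $c$ — and even then only obtain the conclusion for $q$ and $\lambda$ well inside the conjectured range. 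Second, the rigidity step: on a general manifold there is no closed conformal vector field to start from; producing one from the equality case of Kato plus umbilicity is an Obata-type theorem that you assert rather than prove, whereas the paper only carries out the reconstruction of $\mathbb{B}^n$ in the warped-product setting where the field $\nabla w$ with $\nabla^2w=\frac{\Delta w}{n}g$ exists by construction. As written, your proposal is a plan whose two decisive steps are missing, and the first of them is known to be obstructed for $n\geq 3$.
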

This conjecture, if proved to be true, has several interesting geometric consequences, such as a sharp Sobolev trace inequality, a sharp upper bound of the area of the boundary, and a sharp lower bound of the Steklov eigenvalue on such manifolds. Readers are invited to \cite[Section 2]{Wan21} and also \cite[Section 5]{GHW21} for expositions of these implications and the connection with the type II Yamabe problem.

Inspired by Xia-Xiong's work on the Steklov eigenvalue estimate \cite{XX24}, Guo-Hang-Wang \cite[Theorem 2]{GHW21} used a remarkable weight function (see \cref{eq. GHW weight function}) to verify \cref{conj. Wang} within the conjectured \emph{sharp} range for $\lambda$ under nonnegative sectional curvature condition:
\begin{theorem}[Guo-Hang-Wang, \cite{GHW21}]\label{thm. GHW21}
    Let $(M^n,g)\ (n\geq 3)$ be a compact Riemannian manifold with $\mathrm{Sec}\geq 0$ and $\Pi\geq 1$ on $\partial M$. Then the only positive solution to \cref{eq. equation of u on M} is constant provided $3\leq n\leq 8$, 
    $1<q\leq \frac{4n}{5n-9}$ and $0<\lambda\leq \frac{1}{q-1}$.
\end{theorem}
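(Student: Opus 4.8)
The plan is to establish the theorem by a two-stage maximum principle argument for a weighted $P$-function, which is the mechanism underlying the weight in \cref{eq. GHW weight function}. I would set
\begin{equation*}
P=w(u)\,|\nabla u|^{2}
\end{equation*}
for a positive weight $w=w(u)$ to be determined, and prove that $P$ is constant; since the hypothesis $1<q\le \frac{4n}{5n-9}<\frac{n}{n-2}$ keeps $q$ strictly below the critical Sobolev trace exponent, the rigid model \cref{eq. model solution} cannot occur in this range, so $P\equiv\mathrm{const}$ will force $|\nabla u|\equiv 0$, i.e. $u$ constant.

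\emph{Interior estimate.} Because $\mathrm{Sec}\ge 0$ implies $\mathrm{Ric}\ge 0$, the Bochner formula for the harmonic $u$ gives
\begin{equation*}
\tfrac12\Delta|\nabla u|^{2}=|\nabla^{2}u|^{2}+\mathrm{Ric}(\nabla u,\nabla u)\ge |\nabla^{2}u|^{2}.
\end{equation*}
I would then feed in the refined Kato inequality $|\nabla^{2}u|^{2}\ge \frac{n}{n-1}\,\big|\nabla|\nabla u|\big|^{2}$ for harmonic functions. Expanding $\Delta P$ with $\Delta u=0$ leaves a term $w''|\nabla u|^{4}$, a cross term $2w'\langle\nabla u,\nabla|\nabla u|^{2}\rangle$, and the Bochner main term, and the object is to choose $w$ so that, after completing squares, these assemble into a drift inequality $\Delta P\ge \langle b,\nabla P\rangle$ for some vector field $b$. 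This is where the constraints $3\le n\le 8$ and $q\le \frac{4n}{5n-9}$ are consumed: the Kato constant $\frac{n}{n-1}$ and the homogeneity exponent dictated by the boundary nonlinearity must leave a nonnegative residual quadratic form in $(\nabla^{2}u,\nabla u)$. Granting this, the strong maximum principle forces $P$ to be constant or to attain its maximum on $\partial M$.

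\emph{Boundary estimate.} At a boundary maximum point $p\in\partial M$ I would compute $\partial_{\nu}P$ via $\partial_{\nu}|\nabla u|^{2}=2\nabla^{2}u(\nabla u,\nu)$, splitting $\nabla u=\nabla^{\top}u+(\partial_{\nu}u)\nu$. Two inputs control the Hessian: first, the tangential--normal identity $\nabla^{2}u(X,\nu)=X(\partial_{\nu}u)-\Pi(X,\nabla^{\top}u)$ for tangent $X$, into which the boundary condition $\partial_{\nu}u=u^{q}-\lambda u$ yields $\nabla^{\top}u(\partial_{\nu}u)=(qu^{q-1}-\lambda)\,|\nabla^{\top}u|^{2}$, after which convexity $\Pi\ge 1$ gives $\Pi(\nabla^{\top}u,\nabla^{\top}u)\ge|\nabla^{\top}u|^{2}$; and second, the normal--normal term extracted from $0=\Delta u=\nabla^{2}u(\nu,\nu)+H\,\partial_{\nu}u+\Delta_{\partial M}u$, where $H$ is the mean curvature. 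Assembling these against the chosen weight, the admissible range $0<\lambda\le\frac{1}{q-1}$ should produce $\partial_{\nu}P(p)\le 0$, contradicting Hopf's lemma unless $\nabla u(p)=0$. Either way $P\equiv 0$ and $u$ is constant.

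\emph{Main obstacle.} I expect the decisive difficulty to be the simultaneous bookkeeping of the interior and boundary inequalities with a single weight $w$: it must both render the interior drift term nonnegative after the refined Kato step and generate the correct sign in $\partial_{\nu}P$ under $\Pi\ge 1$ and the sharp $\lambda$-range. The boundary computation is especially delicate because the normal--normal Hessian reintroduces the intrinsic Laplacian $\Delta_{\partial M}u$ (weighted by $\partial_{\nu}u=u^{q}-\lambda u$), which has no pointwise sign; controlling it requires exploiting the stationarity and second-order maximality of $P$ along $\partial M$ at $p$ together with tangential differentiation of the boundary equation, and it is presumably here that the full strength of $\mathrm{Sec}\ge 0$ (beyond $\mathrm{Ric}\ge 0$) and the ceiling $n\le 8$ are forced. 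Tracking how much slack the constant $\frac{n}{n-1}$ leaves across both steps is the crux that pins down the precise interval $1<q\le\frac{4n}{5n-9}$.
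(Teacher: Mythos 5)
This is a cited result, and the proof indicated in the paper (and carried out in \cite{GHW21}) is an \emph{integral} Bochner--Reilly-type identity: one multiplies the Bochner formula for $v=u^{-1/a}$ by a power of $v$ times the distance-based weight $\tilde w=d(\cdot,\partial M)-\tfrac12 d^2(\cdot,\partial M)$ of \cref{eq. GHW weight function}, uses the Hessian comparison $\nabla^2\tilde w\le -g$ (which is where $\mathrm{Sec}\ge 0$ together with $\Pi\ge 1$ is actually consumed, via Kasue's comparison theorem), and balances the resulting boundary integrals. Your proposal is instead a pointwise maximum-principle/Hopf-lemma argument for $P=w(u)|\nabla u|^2$. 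This is precisely the strategy that the paper flags as the two-dimensional method of \cite{Wan17} which ``seemingly fails to work when $n\ge 3$,'' and the failure point is exactly the one you identify but do not resolve: at a boundary maximum $p$ of $P$, the normal derivative $\partial_\nu P$ contains the term $-2w(u)\,\partial_\nu u\,\Delta_{\partial M}u$, where $\partial_\nu u=u^q-\lambda u$ changes sign and $\Delta_{\partial M}u$ has no sign. First- and second-order maximality of $P$ along $\partial M$ only give $\nabla^\top P(p)=0$ and $\Delta_{\partial M}P(p)\le 0$; these control $|\nabla^\top\nabla u|^2$ and third derivatives of $u$, not the product $\partial_\nu u\,\Delta_{\partial M}u$. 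In the integral framework this term is disposed of by integrating by parts \emph{on} $\partial M$ (turning $\int_{\partial M}f^b\chi\Delta f$ into $-\int_{\partial M}\langle\nabla(f^b\chi),\nabla f\rangle$, cf.\ the computation in \cref{eq. LHS of 1}), a manoeuvre with no pointwise analogue; this is why the argument works only as an identity in the mean, and why no choice of a single weight $w(u)$ is known to close the pointwise scheme for $n\ge 3$.

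Two further gaps: (i) you locate the use of $\mathrm{Sec}\ge 0$ only through $\mathrm{Sec}\ge 0\Rightarrow\mathrm{Ric}\ge 0$ and the refined Kato inequality, but the paper's own \cref{thm. C. Ricci} shows that $\mathrm{Ric}\ge 0$ alone already yields comparable ranges by the integral method, so the sectional hypothesis must enter elsewhere --- namely in the comparison estimate for the weight $\tilde w$, which your scheme never invokes; consequently your accounting cannot produce the specific thresholds $n\le 8$ and $q\le\frac{4n}{5n-9}$. (ii) The concluding step ``$P\equiv\mathrm{const}$ forces $|\nabla u|\equiv 0$'' is asserted, not proved: $P$ is a positive constant for the nonconstant model solution \cref{eq. model solution}, so ruling this out for subcritical $q$ requires the equality analysis in the Bochner term (traceless Hessian vanishing, hence a warped-product splitting) followed by an argument excluding the model, none of which is supplied.
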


We also mention that the two dimensional analogue of \cref{conj. Wang} has been resolved in \cite[Theorem 2]{Wan17} based on a pointwise analysis using the maximum principle, which seemingly fails to work when $n\geq 3$.

In the first part of this paper, we shall partially confirm \cref{conj. Wang} under the Ricci curvature condition via integration by parts and a meticulous choice of the power of the test function.
\begin{theorem}\label{thm. C. Ricci}
    Let $(M^n,g)\ (n\geq 3)$ be a compact Riemannian manifold with $\mathrm{Ric}\geq 0$ and $\Pi\geq 1$ on $\partial M$. Then the only positive solution to \cref{eq. equation of u on M} is constant, provided one of the following two conditions holds:
    \begin{enumerate}
        \item $3\leq n\leq 7$, $1<q\leq \frac{3n}{4(n-2)}$, and $0<\lambda\leq \min\{\frac{1}{2(q-1)}, \frac{3(n-1)}{2q}\}$\\
        \item $3\leq n\leq 9$, $1<q\leq \frac{n+1+\sqrt{(5n-1)(n-1)}}{4(n-2)}$
        and $0<\lambda\leq \min\{\frac{1}{2(q-1)},\frac{6q+1}{2q+1}\frac{n-1}{2q}\}$
    \end{enumerate}
\end{theorem}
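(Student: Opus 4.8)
The plan is to reduce everything to a single weighted integral inequality obtained by pairing the Bochner formula against a power $u^{-p}$ of the solution, with the exponent $p$ to be optimized at the very end against the constraints on $q$ and $\lambda$. Since $u$ is harmonic, the Bochner formula reads $\tfrac12\Delta|\nabla u|^2 = |\nabla^2 u|^2 + \mathrm{Ric}(\nabla u,\nabla u)$, and the refined Kato inequality for harmonic functions, $|\nabla^2 u|^2 \ge \tfrac{n}{n-1}\,|\nabla|\nabla u||^2$, upgrades this to a differential inequality for $w:=|\nabla u|$. I would multiply this inequality by $u^{-p}$ and integrate over $M$, moving the Laplacian onto the weight by integrating by parts twice. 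Because $\Delta(u^{-p}) = p(p+1)\,u^{-p-2}|\nabla u|^2$ (using $\Delta u=0$), the interior contribution splits into a manifestly good term $\tfrac{p(p+1)}{2}\int_M u^{-p-2}|\nabla u|^4$, the nonnegative Ricci term, and the Kato term, with the free parameter $p$ governing the balance between them.

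The second step is the boundary analysis. Integration by parts produces the boundary integrals $\int_{\partial M} u^{-p}\big(\tfrac12\partial_\nu|\nabla u|^2\big)$ and $\int_{\partial M}\tfrac{p}{2}\,u^{-p-1}u_\nu|\nabla u|^2$. I would handle these using the Robin condition $u_\nu = u^q-\lambda u$ from \cref{eq. equation of u on M} together with the standard decomposition of $\nabla^2 u$ along $\partial M$: writing $\partial_\nu|\nabla u|^2 = 2\,\nabla^2 u(\nabla u,\nu)$ and expressing its tangential and normal parts through the second fundamental form and the boundary Laplacian, in particular $u_{\nu\nu} = -\Delta_{\partial M}u - Hu_\nu$ by harmonicity. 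The convexity hypothesis $\Pi\ge 1$ — which also yields $H\ge n-1$ and $\Pi(\nabla_{\partial M}u,\nabla_{\partial M}u)\ge|\nabla_{\partial M}u|^2$ — is what renders these boundary integrals controllable, after a further integration by parts on the closed manifold $\partial M$ to eliminate the $\Delta_{\partial M}u$ term without destroying its sign.

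I would then assemble the interior and boundary estimates into one inequality of the schematic form $0 \le (\text{coercive interior term in }p) \le -(\text{boundary quadratic form in }p,q,\lambda)$, and choose $p$ so that the interior side stays coercive while the boundary side is sign-definite. The bounds $1<q\le\frac{3n}{4(n-2)}$ and $0<\lambda\le\min\{\tfrac{1}{2(q-1)},\tfrac{3(n-1)}{2q}\}$ in case (1), respectively the sharper pair in case (2), are precisely the conditions under which such a $p$ exists: the $\min$ reflects two competing positivity requirements, one forcing the coefficient of the $u^{2q}$-type boundary term to have the correct sign (the $\tfrac{1}{2(q-1)}$ bound) and one arising from completing the square against the tangential gradient contribution (the dimension-dependent bound). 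The two cases correspond to two admissible windows for $p$, the second exploiting the Kato term more aggressively to reach $n\le 9$ at the cost of a narrower range of $q$ and $\lambda$. Once the combined inequality forces $\int_M u^{-p-2}|\nabla u|^4=0$, we conclude $\nabla u\equiv 0$, so $u$ is constant.

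The main obstacle I anticipate lies in the boundary computation: expanding $\partial_\nu|\nabla u|^2$ cleanly in terms of $\Pi$, $H$, and $\Delta_{\partial M}u$, carrying out the boundary integration by parts while retaining the favorable sign supplied by $\Pi\ge 1$, and tracking every constant so that the resulting quadratic form in $(p,q,\lambda)$ is sharp enough to produce the stated ranges. The optimization of $p$ against the two-sided constraints — rather than the interior Bochner estimate, which is routine — is where the precise numerology ($\tfrac{3n}{4(n-2)}$, the radical appearing in case (2), and the thresholds $n\le 7$ and $n\le 9$) is ultimately won or lost.
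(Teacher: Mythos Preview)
Your outline has the right skeleton — Bochner, integration by parts against a power weight, boundary analysis via the Robin condition and $\Pi\geq 1$, then optimize the exponent — but the interior step as you describe it does not close. The gap is the refined Kato inequality. For harmonic $u$ it gives $|\nabla^2 u|^2\geq \tfrac{n}{n-1}|\nabla|\nabla u||^2$, which produces a $\int u^{-p}|\nabla|\nabla u||^2$ term, not a $\int u^{-p-2}|\nabla u|^4$ term; there is no clean integration by parts that converts the former into the latter without reintroducing $\Delta|\nabla u|$ or $\Delta|\nabla u|^2$ and going in circles. Since $\Delta u=0$, the Bochner identity for $u$ has no $(\Delta u)^2$ or $\langle\nabla\Delta u,\nabla u\rangle$ contributions, so the interior side gives you nothing that competes with the boundary quadratic form. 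With only one parameter $p$ and the Kato route, you cannot manufacture the coercive $|\nabla u|^4$-type term that the argument needs.

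The paper's fix is a power substitution $v:=u^{-1/a}$ (with $a>0$ a second free parameter), which converts harmonicity into $\Delta v=(a+1)v^{-1}|\nabla v|^2$. Now $(\Delta v)^2$ is literally $(a+1)^2 v^{-2}|\nabla v|^4$, so the Bochner identity for $v$ (tested against $v^b$) automatically produces the needed quartic gradient term, and the traceless Hessian $|\nabla^2 v-\tfrac{\Delta v}{n}g|^2$ plays the role your Kato term was meant to play. The paper then extracts a \emph{second} integral identity by multiplying the equation $\Delta v=(a+1)v^{-1}|\nabla v|^2$ itself by $v^b\Delta v$ and integrating; the linear combination of these two identities (with a third parameter $c$) kills the awkward cross term $\int v^{b-1}\langle\nabla_{\nabla v}\nabla v,\nabla v\rangle$. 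After this elimination one is left with a genuine two–parameter family $(a,b)$, and it is the optimization over \emph{both} parameters — not just one $p$ — that yields the specific thresholds $q\leq\tfrac{3n}{4(n-2)}$ in case~(1) and the radical bound in case~(2). A single-exponent scheme cannot reach that numerology.
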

\begin{remark}
    Let us compare the ranges of $q$ and $\lambda$ in \cref{thm. C. Ricci} with those in \cref{thm. GHW21}.
    \begin{itemize}
    \item Condition (2) in \cref{thm. C. Ricci} admits an extra dimension, $n=9$, despite relying on a weaker curvature assumption.
        \item \cref{thm. C. Ricci} yields a larger range of $q$ than \cref{thm. GHW21} in some dimensions. Explicitly, we have   $\frac{n+1+\sqrt{(5n-1)(n-1)}}{4(n-2)}\geq \frac{3n}{4(n-2)}\geq \frac{4n}{5n-9}$, if  $3\leq n\leq 5$; and $\frac{n+1+\sqrt{(5n-1)(n-1)}}{4(n-2)}
        \geq \frac{4n}{5n-9}
        \geq \frac{3n}{4(n-2)}$, if $6\leq n\leq 9$.
        \item 
        In view of the range of $\lambda$, condition (2) doesn't cover condition (1) since $\frac{6q+1}{2q+1}\frac{n-1}{2 q}<\frac{3(n-1)}{2q}$.
    \end{itemize}
\end{remark}
\begin{remark}
    After completing this work, the author learned of the work by Shouman \cite{Sho26}, which independently establishes almost the same result as \cref{thm. C. Ricci}. It is worth noting that \cite{Sho26} furthermore derives a logarithmic-Sobolev type inequality as a corollary of this Liouville type theorem.
\end{remark}

	Except for studying it for general manifolds within some ranges of the parameters,  another way toward \cref{conj. Wang} is to confine ourselves to some specific manifolds. Guo and Wang  proposed such an individual conjecture on the model space $\mathbb{B}^n$  \cite[Conjecture 1]{GW20}:
    \begin{conjecture}[Guo-Wang, \cite{GW20}]\label{conj.Guo-Wang}
        If $u\in C^{\infty}(\mathbb{B}^n)$ is a positive solution of the  equation
        \begin{equation}\label{eq. harmonic}
            \begin{cases}
                \Delta u=0 &\mathrm{ in} \ \mathbb{B}^n,\\
                \frac{\partial u}{\partial\nu}+\lambda u=u^q &\mathrm{on}\ \mathbb{S}^{n-1},
            \end{cases}
        \end{equation}
        where $1<q\leq\frac{n}{n-2}$ and $0<\lambda \leq \frac{1}{q-1}$ are constants. Then either $u$ is a constant function, or $q=\frac{n}{n-2},\, \lambda=\frac{n-2}{2}$ and $u$ corresponds to \cref{eq. model solution}:
    \begin{align*}
            u(x)=\left(
            \frac{n-2}{2}\frac{1-|a|^2}{|a|^2|x|^2-2\langle x,a\rangle+1}
            \right)^{\frac{n-2}{2}},
        \end{align*}
        for some $a\in \mathbb{B}^n$.
    \end{conjecture}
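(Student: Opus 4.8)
The plan is to leverage the \emph{closed conformal vector field} $X = x$ on $\mathbb{B}^n$---the Euclidean position field, which satisfies $\nabla_Y X = Y$ for every vector field $Y$, has $\operatorname{div} X = n$, and restricts to the outward unit normal $\nu$ on $\mathbb{S}^{n-1}$---to convert the P-function inequality into a sharp integral identity. Because $\mathbb{B}^n$ is flat and $\Delta u = 0$, the Bochner formula reads $\Delta|\nabla u|^2 = 2|\nabla^2 u|^2 \ge 0$, so $|\nabla u|^2$ is subharmonic; more usefully, I would pass to the conformally natural variable $v = u^{-\frac{2}{n-2}}$ (and its subcritical analogue $v = u^{s(q)}$ for a power $s$ to be tuned), for which the harmonic equation for $u$ becomes a clean semilinear equation for $v$ whose P-function is built from the trace-free Hessian $\mathring{\nabla^2 v}$.

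First I would establish a Rellich--Pohozaev identity by pairing $\Delta u = 0$ with $\langle X, \nabla u\rangle$ and integrating over $\mathbb{B}^n$. Using $\nabla_Y X = Y$ and $X|_{\partial} = \nu$, integration by parts turns this into a relation among $\int_{\mathbb{B}^n}|\nabla u|^2$, $\int_{\mathbb{S}^{n-1}}(\partial_\nu u)^2$, and $\int_{\mathbb{S}^{n-1}}|\nabla_T u|^2$, where $\nabla_T$ is the tangential gradient on the sphere. Substituting the boundary condition $\partial_\nu u = u^q - \lambda u$ together with the energy identity $\int_{\mathbb{B}^n}|\nabla u|^2 = \int_{\mathbb{S}^{n-1}}(u^{q+1} - \lambda u^2)$ then yields a boundary Pohozaev identity coupling $\int_{\mathbb{S}^{n-1}} u^{q+1}$, $\int_{\mathbb{S}^{n-1}} u^2$, and $\int_{\mathbb{S}^{n-1}}|\nabla_T u|^2$.

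The core step---and the one I expect to be the main obstacle---is to combine this with the weight-function idea of Guo--Hang--Wang used in \cref{thm. GHW21}, now substantially simplified on $\mathbb{B}^n$ because the ambient geometry is flat and $\Pi = \mathrm{Id}$ is exactly critical. Multiplying the boundary equation by a test function $u^{\alpha}$ and integrating by parts in the bulk against $P = u^{-2\beta}|\nabla u|^2$, I would assemble the interior terms (controlled from below by $|\mathring{\nabla^2 u}|^2 \ge 0$, since the full trace $\Delta u$ vanishes) and the boundary terms (where the second fundamental form enters with a definite sign because $\Pi = \mathrm{Id}$) into a single integral inequality. The delicate part is choosing $\alpha = \alpha(q,\lambda)$ and $\beta = \beta(q,\lambda)$ so that every cross term carries a consistent sign precisely on the conjectured window $1 < q \le \frac{n}{n-2}$, $0 < \lambda \le \frac{1}{q-1}$; the flatness and the exact value of $\Pi$ are what let this window be saturated on $\mathbb{B}^n$, whereas on general manifolds only the smaller ranges of \cref{thm. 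C. Ricci} survive.

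Finally I would read off the rigidity from the equality case. The scheme forces the trace-free Hessian of the conformal variable to vanish, $\mathring{\nabla^2 v} \equiv 0$, together with $\nabla_T u \equiv 0$ on $\mathbb{S}^{n-1}$. On the flat ball $\mathring{\nabla^2 v} = 0$ means $\nabla^2 v = \frac{\Delta v}{n}\,g$ with $v$ a quadratic polynomial whose Hessian is a constant multiple of the identity; matching the boundary data pins down $v = \frac{2}{(n-2)(1-|a|^2)}\bigl(|a|^2|x|^2 - 2\langle x,a\rangle + 1\bigr)$, so that $u = v^{-\frac{n-2}{2}}$ is exactly the bubble in \cref{eq. model solution} with parameter $a \in \mathbb{B}^n$, and this branch occurs only at $q = \frac{n}{n-2}$, $\lambda = \frac{n-2}{2}$; in every subcritical case the inequality is strict unless $\nabla u \equiv 0$, forcing $u$ to be constant. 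Since the model bubbles are precisely the orbit of the constants under the conformal dilations generated by $X$, the same closed conformal field that powered the Pohozaev identity also classifies the equality case, which is the conceptual payoff of the method.
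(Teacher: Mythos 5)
Your overall architecture (conformal substitution, a P-function built from the trace-free Hessian, a Pohozaev identity generated by the position field $x$, sign analysis, rigidity from the equality case) is the same skeleton the paper uses, but the proposal has a genuine gap exactly where the difficulty of the conjecture is concentrated: the "core step" of choosing $\alpha(q,\lambda)$ and $\beta(q,\lambda)$ so that "every cross term carries a consistent sign" on the full window $1<q\le\frac{n}{n-2}$, $0<\lambda\le\frac{1}{q-1}$ is asserted, not performed, and there is no reason to believe the generic two-parameter scheme you describe closes. The unweighted Rellich--Pohozaev identity obtained by pairing $\Delta u=0$ with $\langle x,\nabla u\rangle$ is classical (it is what Escobar used at the critical exponent), and the Guo--Hang--Wang weight-function scheme you propose to graft onto it is shown in the paper to reach only $3\le n\le 8$ and $1<q\le\frac{4n}{5n-9}$ even under the stronger hypothesis $\mathrm{Sec}\ge 0$ (\cref{thm. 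GHW21}); neither ingredient, nor their naive combination, covers the conjectured range. What actually closes the argument in the paper (\cref{thm. C. warped product} specialized to $\rho(r)=r$) is a specific chain of identities your sketch does not contain: the divergence form \cref{eq. div 1_1-d} of the P-function with the \emph{forced} weight $v^{1-d}$, $d=\frac{2q}{q-1}$; the identity of \cref{cor. div 1w} for $\mathrm{div}(\nabla_{\nabla w}\nabla v-\frac{\Delta v}{d}\nabla w)$ with $\nabla w$ closed conformal, applied at the two weights $v^{2-d}$ and $v^{1-d}$ to convert the boundary Hessian terms $\int_{\partial M}f^{k-d}(\nabla^2 v-\frac{\Delta v}{d}g)(\nu,\nu)$, $k=1,2$, into bulk integrals of $v^{k-d}\Delta v$ (\cref{eq. warped 1w_1-d}); the homogenization \cref{eq. homogenization} coming from the nonlinear boundary condition; and the lower bound \cref{ineq. integral of f chi squared}, which requires exhibiting a particular combination as a complete square plus a positive multiple of $(\Delta v)^2$. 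Without these, the claim that the signs work out precisely on the conjectured window is the theorem itself, not a step of its proof.

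A smaller but real issue concerns your equality-case discussion. In the subcritical case the paper's rigidity does not come from "strictness unless $\nabla u\equiv 0$" in some unspecified inequality; it comes from the coefficient $\frac{1}{n}-\frac{1}{d}>0$ multiplying $(\Delta v)^2$ (since $d>n$ exactly when $q<\frac{n}{n-2}$), which forces $\Delta v=\frac{d}{2}v^{-1}|\nabla v|^2\equiv 0$ and hence $v$ constant. And in the critical case, passing from $P=\mathrm{const}$ (equivalently $\nabla^2 v=\frac{\Delta v}{n}g$) to the explicit bubble family, including the fact that this branch only occurs at $\lambda=\frac{n-2}{2}$, is a nontrivial computation that the paper outsources to \cite{GL25}. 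So the proposal is a reasonable roadmap in the same spirit as the paper, but the parameter choices and auxiliary identities that make the signs come out are missing, and those are precisely what kept this conjecture open.
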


    Historically, Escobar \cite[Theorem 2.1]{Esc90}  classified all solution of \cref{eq. harmonic} by Obata's integral method \cite{Oba71} when $q=\frac{n}{n-2}$ and $\lambda=\frac{n-2}{2}$ (see also \cite{LZ95} for another proof using method of moving spheres). After several contributions  in this direction \cite{GW20,GHW21,LO23,Ou24}, Gu-Li \cite[Theorem 1.1]{GL25} finally give an affirmative answer to \cref{conj.Guo-Wang}.
    \begin{theorem}[Gu-Li, \cite{GL25}]\label{thm. GL25}
        If $u\in C^{\infty}(\mathbb{B}^n)$ is a positive solution of the  equation
        \cref{eq. harmonic}
        for some constants $1<q<\frac{n}{n-2}$ and $0<\lambda \leq \frac{1}{q-1}$, then $u$ is a constant function.
    \end{theorem}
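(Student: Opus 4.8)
The plan is to exploit the closed conformal vector field on the unit ball and run the P-function method in integrated form, following the general principle of this paper. Write $X$ for the position vector field $x\mapsto x$ on $\mathbb{B}^n$; since the flat metric gives $\nabla_Y X=Y$ for every $Y$, the field $X$ is closed and conformal, and along $\mathbb{S}^{n-1}$ it coincides with the outward unit normal $\nu$. Consequently the Robin condition becomes $Xu+\lambda u=u^q$ on $\mathbb{S}^{n-1}$, and a direct computation using $\Delta u=0$ shows that $Xu=\langle x,\nabla u\rangle$ is again harmonic — the single algebraic fact that makes the conformal field so useful here. First I would record that it suffices to prove $u$ is constant on $\mathbb{S}^{n-1}$: if the tangential gradient $\nabla^{\partial}u$ vanishes identically then $u|_{\partial}$ is constant, and the maximum principle for the harmonic function $u$ forces $u$ to be that constant throughout $\mathbb{B}^n$.

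The analytic heart is a weighted Bochner identity. Because $\mathbb{B}^n$ is flat and $u$ is harmonic, $\tfrac12\Delta|\nabla u|^2=|\nabla^2u|^2\ge 0$; rather than use this raw, I would test it against the weight $u^{-a}$ for a power $a$ to be chosen, i.e. work with the P-function $P=u^{-a}|\nabla u|^2$, and integrate by parts to convert $\int_{\mathbb{B}^n}u^{-a}|\nabla^2u|^2$ into boundary integrals over $\mathbb{S}^{n-1}$ together with interior remainder terms. On the boundary I would substitute the hypersurface identities valid for $\mathbb{S}^{n-1}$, namely $\Pi=\mathrm{Id}$, mean curvature $n-1$, the decomposition $\nabla^2u(\nu,T)=T(u_\nu)-\Pi(T,\nabla^{\partial}u)$ for tangent $T$, and $\nabla^2u(\nu,\nu)=-\Delta_{\mathbb{S}^{n-1}}u-(n-1)u_\nu$; differentiating the boundary condition gives $\nabla^{\partial}u_\nu=(qu^{q-1}-\lambda)\nabla^{\partial}u$, and integrating $\Delta_{\mathbb{S}^{n-1}}u$ by parts on the closed manifold $\mathbb{S}^{n-1}$ turns every boundary contribution into an integral of a power of $u$ against $|\nabla^{\partial}u|^2$ and against $u_\nu^2=(u^q-\lambda u)^2$.

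To close the argument I would feed in two further families of identities coming from the same conformal field. Testing $\Delta u=0$ against $u^{s}$ yields $\int_{\mathbb{S}^{n-1}}u^{s}u_\nu=s\int_{\mathbb{B}^n}u^{s-1}|\nabla u|^2\ge 0$, a scale of moment inequalities among $\int_{\mathbb{S}^{n-1}}u^{p}$ that are equalities exactly when $u$ is constant; and the Rellich–Pohozaev identity produced by $X$ gives $\int_{\mathbb{S}^{n-1}}|\nabla^{\partial}u|^2=\int_{\mathbb{S}^{n-1}}u_\nu^2+(n-2)\int_{\mathbb{B}^n}|\nabla u|^2$. Combining the weighted Bochner identity with these, the goal is to select the power $a$ so that, under the hypotheses $1<q<\frac{n}{n-2}$ and $0<\lambda\le\frac{1}{q-1}$, all the boundary terms assemble into an expression that is manifestly non-positive — so that the non-negative quantity $\int_{\mathbb{B}^n}u^{-a}|\nabla^2u|^2$ is forced to vanish, whence $\nabla^2u\equiv 0$, and the residual affine possibility is excluded by inspecting the boundary condition.

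The hard part will be precisely this bookkeeping: after all substitutions one is left with a quadratic expression in the $u$-moments $\int_{\mathbb{S}^{n-1}}u^{p}$ and in the weighted gradients $\int_{\mathbb{S}^{n-1}}u^{p}|\nabla^{\partial}u|^2$, and one must show it has a definite sign on the entire admissible $(q,\lambda)$ region. I expect the power $a$ to be tied to the scaling exponent $\tfrac{2}{q-1}$ of the problem, and I expect the strict inequality $q<\frac{n}{n-2}$ to be exactly what provides strict negativity: at the critical exponent the quadratic form degenerates, which is the analytic shadow of the bubble solutions in \cref{eq. model solution} that appear there and must not be ruled out. Verifying that the constraint $\lambda\le\frac{1}{q-1}$ is the sharp one compatible with this sign, and tracking the cross terms between the Pohozaev identity and the moment inequalities, is where the meticulous choice of power — in the spirit of \cref{thm. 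C. Ricci} — does the real work.
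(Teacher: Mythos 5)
Your framing coincides with the paper's own strategy for this theorem: the unit ball is treated as the $\rho(r)=r$ case of \cref{thm. C. warped product}, organized around the closed conformal field $X=x$ (the paper uses $\nabla w=-x$ with $w=\tfrac12(1-|x|^2)$, the special case of \cref{eq. weight in warped case}), a weighted Bochner/P-function identity, a Pohozaev identity, and the moment identities $\int_{\partial}u^su_\nu=s\int_{\mathbb{B}^n} u^{s-1}|\nabla u|^2$. Every identity you write down is correct. The problem is that the proposal stops exactly where the theorem becomes hard: you defer both the choice of the power $a$ and the verification that the assembled boundary expression is non-positive on the whole region $1<q<\frac{n}{n-2}$, $0<\lambda\le\frac1{q-1}$, and the ingredients you list are demonstrably insufficient to make that sign come out. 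A single Bochner weight $u^{-a}$ combined with the \emph{unweighted} Pohozaev identity and moment/H\"older inequalities is essentially the scheme of the earlier partial results \cite{GW20,GHW21}, which only reach restricted ranges of $(q,\lambda)$; if that bookkeeping closed on the full range, the conjecture would not have stayed open until Gu--Li.

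Concretely, in the paper's notation ($v=u^{-(q-1)}$, $d=\tfrac{2q}{q-1}$, $f=v|_{\partial M}$, $\chi=\partial_\nu v$), two ingredients are missing from your plan. First, the identity of \cref{cor. div 1w} must be applied with the conformal field paired against $\nabla^2v-\tfrac{\Delta v}{d}g$ at \emph{two} consecutive weights, $v^{2-d}$ and $v^{1-d}$ (\cref{eq. warped 1w_2-d} and \cref{eq. warped 1w_1-d}); your Pohozaev identity is only the unweighted shadow of the second of these, and it is the $w$-weighted first one that converts the otherwise uncontrollable boundary term $\int_{\partial M}f^{2-d}\bigl(\nabla^2v-\tfrac{\Delta v}{d}g\bigr)(\nu,\nu)$ into interior integrals and produces the crucial factor $1-\tfrac{2\lambda}{d-2}w$ multiplying the good bulk term in \cref{eq. key integral identity}. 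Second, one needs the lower bound \cref{ineq. integral of f chi squared} for $\int_{\partial M}f^{1-d}\chi^2$, obtained by recognizing a $w$-weighted bulk integrand as a complete square (a modified trace-free Hessian) plus a nonnegative multiple of $(\Delta v)^2$; this is what allows the $\chi^2$-boundary contribution to be absorbed for $\lambda$ all the way up to $\tfrac1{q-1}$. Without these, the quadratic form in the boundary moments of $u$ that your bookkeeping produces does not have a definite sign on the full admissible region, so the argument as proposed has a genuine gap rather than being a routine computation away from completion.
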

    Gu-Li's method is based on sophisticated integration by parts with several delicately chosen parameters.

As a further step upon the resolution of Guo-Wang's \cref{conj.Guo-Wang}, we investigate the  case that the manifold $(M^n,g)$ is a warped product over $\mathbb{S}^{n-1}$.

\begin{theorem}\label{thm. C. warped product}
    Let $(M^n,g)
        =([0,R]\times \mathbb{S}^{n-1},dr^2+\rho(r)^2dS_{n-1}^2)$ be a smooth warped product manifold (i.e. $\rho(0)=0,\rho'(0)=1, \rho^{(\text{even})}=0$). Assume $Ric_g\geq 0$ and the second fundamental form $\Pi\geq 1$ (i.e. $\frac{\rho'(R)}{\rho(R)}\geq 1$). Assume, in addition, that $\rho'(R)>\frac{n-2}{n-1}$.

        Consider $1<q\leq \frac{2(n-1)\rho'(R)-(n-2)}{n-2}$ and  $0<\lambda\leq
        \frac{1}{q-1}C(n,q,\rho'(R))$, where $C(n,q,\rho'(R))\in(0,1]$ is a constant depending on $n,q,\rho'(R)$ (see \cref{def. C} for its explicit definition).
        Then the positive solution $u$ to the equation \cref{eq. equation of u on M} must be constant, unless $(M^n,g)$ is isometric to $\mathbb{B}^n$, $q=\frac{n}{n-2}$, $\lambda=\frac{n-2}{2}$ and u corresponds to the model solution \cref{eq. model solution}.
\end{theorem}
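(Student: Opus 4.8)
The plan is to superimpose the P-function machinery behind \cref{thm. C. Ricci} onto the rigidity furnished by the closed conformal vector field that a warped product always carries. On $(M^n,g)=([0,R]\times\mathbb{S}^{n-1},dr^2+\rho^2\,dS_{n-1}^2)$ the field $X=\rho(r)\partial_r$ is closed conformal, obeying $\nabla_Y X=\rho'(r)\,Y$ for all $Y$; hence $\operatorname{div}X=n\rho'$, while on $\partial M=\{r=R\}$ one has $X=\rho(R)\nu$. I would first record the structural constraints of the hypotheses: $\operatorname{Ric}_g\geq0$ forces $\rho''\leq0$ (so $\rho'$ decreases from $\rho'(0)=1$ and stays $\geq\rho'(R)>0$) together with $(n-2)(1-(\rho')^2)\geq\rho\rho''$, whereas $\Pi\geq1$ reads $\rho'(R)\geq\rho(R)$ and gives mean curvature $H=(n-1)\rho'(R)/\rho(R)\geq n-1$. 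These sign the bulk terms below and bound the boundary terms.

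The first ingredient is a Pohozaev identity: multiplying $\Delta u=0$ by $X(u)=\langle X,\nabla u\rangle$ and integrating by parts twice, using $\nabla X=\rho'g$ and $\operatorname{div}X=n\rho'$, yields
\begin{equation*}
\frac{n-2}{2}\int_M \rho'\,|\nabla u|^2\,dV+\frac{\rho(R)}{2}\int_{\partial M}\bigl(u_\nu^2-|\nabla_\partial u|^2\bigr)\,dA=0,
\end{equation*}
with $\nabla_\partial$ the tangential gradient. Since $\rho'\geq\rho'(R)>0$, the bulk term is nonnegative and vanishes precisely when $u$ is constant. The factor $n-2$ weighed against the conformal scaling $\rho'(R)$ supplied by the boundary is exactly what manufactures the threshold $q\leq\frac{2(n-1)\rho'(R)-(n-2)}{n-2}$, which collapses to the sharp exponent $\frac{n}{n-2}$ when $\rho'(R)=1$, i.e.\ in the $\mathbb{B}^n$ case.

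The second ingredient is the P-function/Reilly estimate. Writing $f=u^\beta$ for a power $\beta$ to be optimized, harmonicity gives $\Delta f=\beta(\beta-1)u^{\beta-2}|\nabla u|^2$, and I would feed $f$ into the Reilly formula, discard $\operatorname{Ric}(\nabla f,\nabla f)\geq0$, bound the boundary contribution $Hf_\nu^2+2f_\nu\Delta_\partial\bar f+\Pi(\nabla_\partial\bar f,\nabla_\partial\bar f)$ from below using $H\geq n-1$ and $\Pi\geq1$, and apply a refined Kato inequality for $|\nabla^2 f|^2$ adapted to harmonic $u$. After substituting the boundary law $u_\nu=u^q-\lambda u$ and the companion relation $\int_{\partial M}|\nabla_\partial u|^2=-\int_{\partial M}u\,\Delta_\partial u$, this produces a second inequality coupling the energies $\int_M u^{s-1}|\nabla u|^2$ to the boundary integrals $\int_{\partial M}u^{2q}$, $\int_{\partial M}u^{q+1}$, $\int_{\partial M}u^2$; the scalar test-function identity $\int_{\partial M}u^{s+q}=\lambda\int_{\partial M}u^{s+1}+s\int_M u^{s-1}|\nabla u|^2$ supplies the last link.

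Combining the three identities and tuning $\beta,s$ against the conformal weight $\rho'(R)$ reduces matters to the nonnegativity of a single quadratic form in $\int_{\partial M}u^{2q}$, $\int_{\partial M}u^{q+1}$, $\int_{\partial M}u^2$, whose defining inequalities are precisely $q\leq\frac{2(n-1)\rho'(R)-(n-2)}{n-2}$ and $\lambda\leq\frac{1}{q-1}C(n,q,\rho'(R))$; within this range every term carries a favorable sign and forces $\nabla u\equiv0$. If instead $u$ is non-constant, all intermediate inequalities must saturate, which is where the closed conformal field is decisive: the exponent is driven to the endpoint $q=\frac{n}{n-2}$, forcing $\rho'(R)=1$ and hence $\rho(r)=r$ (since $\rho'$ decreases from $\rho'(0)=1$ with $\rho''\leq0$), so that $(M,g)\cong\mathbb{B}^n$; equality in Bochner makes $\nabla^2 f$ pure trace, and integrating the resulting ODE for $u$ along the $X$-flow recovers $\lambda=\frac{n-2}{2}$ and the model solution \cref{eq. model solution}. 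I expect the principal obstacle to be this fusion step — checking that, for the optimal $\beta,s$, the combined boundary quadratic form is sign-definite over the full stated ranges, equivalently pinning down the sharp constant $C(n,q,\rho'(R))\in(0,1]$ — together with the refined Kato inequality and the precise coupling of the two integral identities through the shared weight $\rho'(R)$; the rigidity, by contrast, should follow routinely once the endpoint forces the flat metric.
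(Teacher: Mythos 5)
Your high-level strategy is the right one and matches the paper's in spirit: the closed conformal field $X=\rho\,\partial_r$ (the paper works with $\nabla w=-\rho\,\partial_r$ for $w=\int_r^R\rho$), the Pohozaev identity it generates (your displayed identity is correct and is exactly \cref{eq. warped 1w_1-d} specialized via \cref{eq. integral trick}), the Bochner/P-function identity for a tuned power of $u$ (the paper uses $v=u^{-2/(d-2)}$ with $d=2q/(q-1)$, so that the bulk term becomes $|\nabla^2v-\tfrac{\Delta v}{n}g|^2+(\tfrac1n-\tfrac1d)(\Delta v)^2+\mathrm{Ric}$, all signed), and the dichotomy $\rho'(R)<1\Rightarrow d>n\Rightarrow\Delta v\equiv0$ versus $\rho'(R)=1\Rightarrow M=\mathbb{B}^n(R)$ for the rigidity. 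But the proposal has a genuine gap precisely where you flag "the principal obstacle": the fusion step that is supposed to reduce everything to a sign-definite quadratic form in $\int_{\partial M}u^{2q}$, $\int_{\partial M}u^{q+1}$, $\int_{\partial M}u^{2}$ and thereby "pin down" $C(n,q,\rho'(R))$ is asserted, not carried out, and as stated it cannot work in that form. After the conformal field is used, the leftover terms are not pure boundary integrals of powers of $u$ but bulk integrals weighted by the \emph{variable} function $\rho'(r)$, namely $\int_M\rho'\,v^{2-d}\Delta v$ and $\int_M\rho'\,v^{1-d}\Delta v$ (see \cref{eq. key integral identity}). Controlling these using only $\rho'(R)\le\rho'\le1$ is exactly the new difficulty of the warped case relative to $\mathbb{B}^n$, and it is what produces the loss factor $C\le1$; your outline collapses this into an unspecified "tuning of $\beta,s$ against $\rho'(R)$".

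Concretely, the paper needs three further steps that have no counterpart in your sketch: (i) a homogenization of the two bulk integrals via the boundary law, \cref{eq. homogenization}; (ii) a lower bound $\rho(R)\int_{\partial M}f^{1-d}\chi^2\ge\frac2d\int_M\rho'v^{2-d}\Delta v$, obtained by a second application of the conformal field together with a completed-square identity for $\nabla^2v-\tfrac{\Delta v}{n}g+\tfrac{3-d}{2}(\tfrac{dv\otimes dv}{v}-\tfrac1n\tfrac{|\nabla v|^2}{v}g)$ (this replaces your appeal to a "refined Kato inequality", which the paper does not use and which would not by itself produce the needed signed boundary term); and (iii) the verification of two explicit algebraic inequalities (\cref{ineq. algebraic 1}, \cref{ineq. algebraic 2}) at the two extreme values $\rho'=\rho'(R)$ and $\rho'=1$, whose simultaneous solvability in $\lambda$ \emph{defines} $C(n,q,\rho'(R))$ (including the case split at $n=3$, $q>2$). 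There is also a smaller unaddressed point: the Bochner bulk term in the key identity carries the weight $1-\tfrac{2}{d-2}\lambda\tfrac{w}{\rho(R)}$, whose nonnegativity requires $w\le\rho(R)$, i.e.\ $R\le1$, which must be extracted from $\mathrm{Ric}\ge0$ and $\Pi\ge1$ (\cref{ineq. R leq 1}). Until these computations are supplied, the stated ranges of $q$ and $\lambda$ — the actual content of the theorem — are not established.
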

\begin{remark}
    One could see that $C(n,q,1)\equiv 1, \forall n\geq 3, 1<q\leq \frac{n}{n-2}$, so \cref{thm. C. warped product} recovers \cref{thm. GL25} by taking $(M^n,g)$ to be the model space $\mathbb{B}^n$ (i.e. $\rho(r)=r,\forall r\in[0,1]$).
\end{remark}
\begin{remark}
    \cref{thm. C. warped product} applies to a wide range of warped product manifolds. For instance,
    consider the warping factor $\rho(r)=r-cr^3,\ r\in [0,R]$, where $c\geq 0$ is a constant. It's easy to see that if $c,R$ satisfy
    \begin{align*}
    1-R\geq cR^2(3-R)
    ,\quad \text{and}\quad 
        cR^2<\frac{1}{3(n-1)},
    \end{align*}
    then the hypotheses in \cref{thm. C. warped product} are fulfilled.
\end{remark}

Our approach to \cref{thm. C. warped product} is grounded in a general principle for applying the P-function method to classify solutions of such semilinear elliptic equations, thereby building upon and extending the idea presented in \cite{Wan22}. 
The basic idea is to start with the critical power case (i.e. $q=\frac{n}{n-2}$) and study the model solution \cref{eq. model solution} and come up with an appropriate function, known as the P-function in literature (in honor of L. Payne), whose constancy implies the rigidity of the solution $u$. For the subcritical power case (i.e. $1<q<\frac{n}{n-2}$), we regard the equation \cref{eq. equation of u on M} as the critical one in a $d-$dimensional space (see \cref{eq. intrinsic dimension}), then a modified argument as in the critical power case would  imply the conclusion. Readers interested in the P-function method could refer to \cite{Pay68,Wei71,Dan11,CFP24} for more research in  this realm.

Specific to \cref{thm. C. warped product}, we start with a divergence-type equation of the P-function (see \cref{eq. div 1_1-d}) to obtain a key integral identity. A new weight function (see \cref{eq. weight in warped case}) is then employed to handle the challenging boundary terms and derive the rigidity of the solution $u$.
Our approach highlights the role of  this weight function as offering a closed conformal vector field (see \cref{cor. div 1w} and \cref{eq. warped 1w_1-d}) and leveraging the warped product structure of the underlying manifolds. 
It's worth mentioning that our arguments also yield an alternative proof of \cref{thm. GL25} and elucidate the choice of parameters in their proof from the viewpoint of this P-function method.

Note that for a warped product manifold with nonnegative Ricci curvature, its radial-direction sectional curvature is also nonnegative, from which the arguments of Guo-Hang-Wang \cite{GHW21} would follow and yield a \emph{sharp} range of $\lambda$ for $3\leq n\leq 8$. In contrast, while potentially sacrificing sharpness in $\lambda$, we obtain a valid range for \emph{all} dimensions $n$. We shall discuss this difference in detail after the proof of \cref{thm. C. warped product}.

Finally, we notice that if we confine ourselves to classifying the minimizer of the  Sobolev trace inequality on warped product manifolds, then a larger range of $\lambda$ would be admitted.
\begin{theorem}\label{thm.C. minimizer}
        Let $(M^n,g)
        =([0,R]\times \mathbb{S}^{n-1},dr^2+\rho(r)^2dS_{n-1}^2)$ be a smooth warped product manifold (i.e. $\rho(0)=0,\rho'(0)=1, \rho^{(\text{even})}=0$). Assume $Ric_g\geq 0$ and the second fundamental form $\Pi\geq 1$ (i.e. $\frac{\rho'(R)}{\rho(R)}\geq 1$). Assume, in addition, that $\rho'(R)>\frac{n-2}{n-1}$.
        
         Consider $1<q\leq \frac{2(n-1)\rho'(R)-(n-2)}{n-2}$ and $0<\lambda\leq 
         \min\{\frac{n-1}{q+1}\frac{\rho'(R)}{\rho(R)}, \frac{1}{q-1}\frac{\rho(R)}{\rho'(R)} \}$ . Let $u_0$ be a minimizer of the Sobolev trace functional 
           \begin{align*}
               Q_{q,\lambda}(u):=\frac{\int_{M}
               |\nabla u|^2
               +\lambda \int_{\partial M} u^2 }{(\int_{\partial M}u^{q+1})^{\frac{2}{q+1}}},\quad 
               \forall u\in C^{\infty}(M)\setminus C_0^{\infty}(M).
           \end{align*}
           Then $u_0$ is constant, 
           unless $(M^n,g)$ is isometric to $\mathbb{B}^n$, 
           $q=\frac{n}{n-2}, \lambda=\frac{n-2}{2}$, 
           and $u_0$ is given by the model solution \cref{eq. model solution}
           up to multiplying by a constant.
    \end{theorem}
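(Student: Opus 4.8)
The plan is to exploit the fact that a minimizer is not merely a solution but a solution carrying extra variational information, and to feed that information into the integral identity already developed for \cref{thm. C. warped product}.

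First I would record the Euler--Lagrange structure. Since $Q_{q,\lambda}$ is invariant under the scaling $u\mapsto tu$ ($t>0$), a minimizer $u_0$ (which, after standard regularity theory and the Hopf lemma, may be taken smooth and positive) is a critical point satisfying $\Delta u_0=0$ in $M$ and $\frac{\partial u_0}{\partial\nu}+\lambda u_0=\mu\,u_0^q$ on $\partial M$ for a Lagrange multiplier $\mu>0$. Replacing $u_0$ by $\mu^{1/(q-1)}u_0$ leaves $Q_{q,\lambda}$ unchanged and normalizes $\mu=1$, so I may assume $u_0$ solves \cref{eq. equation of u on M}. Testing the boundary condition against $u_0$ and using harmonicity gives the energy identity $\int_M|\nabla u_0|^2+\lambda\int_{\partial M}u_0^2=\int_{\partial M}u_0^{q+1}$, whence $Q_{q,\lambda}(u_0)=\bigl(\int_{\partial M}u_0^{q+1}\bigr)^{(q-1)/(q+1)}$. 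Comparing with constant competitors, for which a direct computation gives $Q_{q,\lambda}\equiv\lambda|\partial M|^{(q-1)/(q+1)}$, the minimality of $u_0$ yields the scale-invariant a priori bound
\[
\int_{\partial M}u_0^{q+1}\ \le\ \lambda^{\frac{q+1}{q-1}}\,|\partial M|,
\]
i.e. among all solutions in this conformal class the constant solution $u\equiv\lambda^{1/(q-1)}$ carries the largest boundary $L^{q+1}$-mass. This quantitative input is exactly what is unavailable for a general, non-minimizing solution, and it is the source of the enlarged $\lambda$-range.

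Next I would run the same P-function machinery as for \cref{thm. C. warped product}: starting from the divergence form of the P-function (cf. \cref{eq. div 1_1-d}) and the weight realizing the closed conformal field $X=\rho(r)\partial_r$ (which satisfies $\nabla X=\rho'(r)g$ and has the \emph{constant} normal trace $\langle X,\nu\rangle=\rho(R)$ on $\partial M$; see \cref{eq. weight in warped case}), integration by parts produces an identity of the schematic form
\[
\int_M(\text{nonnegative bulk})=\int_{\partial M}(\text{boundary term}),
\]
where the bulk is nonnegative by $\mathrm{Ric}\ge 0$ together with the warped-product structure (this is where $\rho'(R)>\frac{n-2}{n-1}$ and the bound on $q$ enter, guaranteeing the intrinsic-dimension arithmetic of \cref{eq. model solution}) and vanishes only along rigid, radial-type solutions. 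The boundary term is a combination of $\int_{\partial M}u_0^{q+1}$, $\int_{\partial M}u_0^2$ and the tangential Dirichlet energy $\int_{\partial M}|\nabla_{\partial M}u_0|^2$, with coefficients depending on $\lambda,q,\rho(R),\rho'(R)$.

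The closing estimate is where minimality substitutes for pointwise smallness of $\lambda$. The tangential Dirichlet energy is bounded below by the sharp Poincaré (first Laplace/Steklov eigenvalue) inequality on the round boundary sphere $(\mathbb{S}^{n-1},\rho(R)^2dS_{n-1}^2)$, namely $\int_{\partial M}|\nabla_{\partial M}u_0|^2\ge\frac{n-1}{\rho(R)^2}\int_{\partial M}(u_0-\overline{u_0})^2$; the hypothesis $\lambda\le\frac{n-1}{q+1}\frac{\rho'(R)}{\rho(R)}$ is precisely the threshold making the corresponding coefficient sign-definite, while the residual zeroth-order boundary terms are absorbed using the a priori bound above together with $\lambda\le\frac{1}{q-1}\frac{\rho(R)}{\rho'(R)}$. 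This forces the boundary term to have the right sign, hence the nonnegative bulk to vanish, so that $u_0$ is of rigid type. Tracing the equality cases (equality in Poincaré pins $u_0|_{\partial M}$ to a first eigenfunction, and equality in the remaining estimates pins down $q,\lambda$ and the profile $\rho$) gives the dichotomy: either $u_0$ is constant, or one is in the critical configuration $\mathbb{B}^n$, $q=\frac{n}{n-2}$, $\lambda=\frac{n-2}{2}$ with $u_0$ the model solution \cref{eq. model solution} up to scaling. I expect the main obstacle to be this final matching in Step~3--4: verifying that the boundary term produced by the P-function identity is controlled exactly by the combination of the sharp boundary Poincaré inequality and the minimality bound, and that their combined reach is precisely $\lambda\le\min\{\frac{n-1}{q+1}\frac{\rho'(R)}{\rho(R)},\ \frac{1}{q-1}\frac{\rho(R)}{\rho'(R)}\}$, which requires careful bookkeeping of how $\rho'(R)/\rho(R)$ enters simultaneously through the conformal normal trace and through the spectral gap, leaving no cross term uncontrolled.
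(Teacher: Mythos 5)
Your overall skeleton coincides with the paper's: derive the Euler--Lagrange equation for the minimizer, normalize the Lagrange multiplier, extract the a priori bound $\int_{\partial M}u_0^{q+1}\le\lambda^{\frac{q+1}{q-1}}|\partial M|$ by comparing with constants, and feed this into the P-function identity generated by the closed conformal field $\rho(r)\partial_r$ with the weight \cref{eq. weight in warped case}. Steps 1, 2 and 4 of the paper are essentially what you describe, and the Pohozaev-type control of the term $\int_{\partial M}f^{1-d}\bigl(\nabla^2v-\frac{\Delta v}{d}g\bigr)(\nu,\nu)$ is exactly the conformal-vector-field computation you anticipate.

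The genuine gap is in your closing estimate. The boundary term that must be shown nonpositive is, after passing to $v=u^{-2/(d-2)}$, $f=v|_{\partial M}$, $d=\frac{2q}{q-1}$ and completing a square in $\chi=\partial_\nu v$ (this completion of squares is where $\lambda\le\frac{n-1}{q+1}\frac{\rho'(R)}{\rho(R)}$ is actually used --- to make the coefficient of $\int f^{1-d}\chi^2$ nonpositive, not to make a Poincar\'e coefficient sign-definite), a combination of $\int_{\partial M}f^{1-d}|\nabla f|^2$, $\int_{\partial M}f^{3-d}$ and $\int_{\partial M}f^{1-d}$, i.e.\ integrals weighted by \emph{negative} powers of the boundary trace. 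The linear spectral-gap inequality $\int|\nabla_{\partial M}u_0|^2\ge\frac{n-1}{\rho(R)^2}\int(u_0-\overline{u_0})^2$ does not control these. What is needed is a lower bound for $\int\bigl(|\nabla\tilde f|^2+s\tilde f^2\bigr)$ in terms of $\bigl(\int\tilde f^{\,p+1}\bigr)^{2/(p+1)}$ for $\tilde f=f^{(3-d)/2}$ and $p=\frac{d+1}{d-3}$ (so that $\tilde f^{\,p+1}=f^{1-d}$): this is the sharp nonlinear Sobolev inequality of Bidaut-V\'eron--V\'eron on $\mathbb{S}^{n-1}(\rho(R))$, itself a Liouville-type theorem, and its admissibility condition $s\le\frac{n-1}{p-1}$ is precisely where the second restriction $\lambda\le\frac{1}{q-1}\frac{\rho(R)}{\rho'(R)}$ enters. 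Only after this nonlinear inequality is in place does the minimality bound $\int_{\partial M}f^{1-d}\le\lambda^{d-1}|\partial M|$ close the argument; the first-eigenvalue inequality is merely its linearization at constants and cannot substitute for it when $f$ is far from constant. (There is also a degenerate case $d=3$, i.e.\ $n=3$, $q=3$, where the tangential Dirichlet term disappears and the minimality bound alone suffices, which your scheme does not isolate.)
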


    The idea toward \cref{thm.C. minimizer}  originates from Escobar \cite[Theorem 2]{Esc88}, in which he classified the minimizer of the critical power case (i.e. $q=\frac{n}{n-2}$) of the Sobolev trace inequality on $\mathbb{B}^n$. 
    As in \cref{thm. C. warped product}, the closed conformal vector field also  plays a vital role in our proof.

    This paper is organized as follows. In \cref{sec2}, we study \cref{conj. Wang} on general manifolds and prove \cref{thm. C. Ricci}. In \cref{sec3}, we confine ourselves to warped product manifolds and give the proof of \cref{thm. C. warped product}, which also provides an alternative proof of \cref{thm. GL25} by taking $\rho(r)=r,\forall r\in[0,1]$. After finishing the proof, we shall have a discussion about the connection and difference between our \cref{thm. C. warped product} and Guo-Hang-Wang's \cref{thm. GHW21}.
    Finally, in \cref{Sec.4}, we classify the minimizers on warped product manifolds and prove \cref{thm.C. minimizer}. 
 
\section{The proof of \cref{thm. C. Ricci}}\label{sec2}
Our proof exploits two main ingredients in the method of integration by parts: Bochner formula (Step 1) and the equation itself (Step 2).
\begin{proof}[Proof of \cref{thm. C. Ricci}:]
\textbf{Step 1:}
    Let $v:=u^{-\frac{1}{a}}$, where $a>0$ is to be determined. We follow the notations in \cite{GHW21} and define $\chi:=\frac{\partial v}{\partial \nu},\, f:=v|_{\partial M}$, where $\nu$ is the unit outer normal vector of $\partial M$. Then it's straightforward to see that
    \begin{align}\label{eq. equation of v on M}
        \begin{cases}
            \Delta v=(a+1)v^{-1}|\nabla v|^2 &\text{in }M,\\
            \chi=\frac{1}{a}(\lambda f-f^{a+1-aq}) &\text{on }\partial M.
        \end{cases}
    \end{align}
    By Bochner formula, there holds
    \begin{align*}
        \frac{1}{2}\Delta |\nabla v|^2
        =\left|\nabla^2 v-\frac{\Delta}{n}g\right|^2
        +\frac{1}{n}(\Delta v)^2
        +\langle \nabla \Delta v,\nabla v\rangle
        +\mathrm{Ric}(\nabla v,\nabla v).
    \end{align*}
    Multiply both sides by $v^b$ and integrate it over $M$, where $b$ is a constant to be determined, we have
    \begin{align}\label{eq. Bochner 1}
        \frac{1}{2}\int_M v^b\Delta |\nabla v|^2
        =\int_Mv^b\left(\left|\nabla^2 v-\frac{\Delta}{n}g\right|^2+\mathrm{Ric}(\nabla v,\nabla v) \right)
        +\frac{1}{n}\int_M v^b(\Delta v)^2
        +\int_M v^b\langle \nabla \Delta v,\nabla v\rangle
        .
    \end{align}
    It follows from \cref{eq. equation of v on M}, divergence theorem and the boundary curvature assumption $\Pi\geq 1, H\geq n-1$ that the left hand side of \cref{eq. Bochner 1} equals
    \begin{align}\label{eq. LHS of 1}
        &\frac{1}{2}\int_M v^b\Delta |\nabla v|^2
        =\frac{1}{2}\int_M \mathrm{div}(v^b\nabla|\nabla v|^2)
        -\langle\nabla v^b,\nabla |\nabla v|^2\rangle\notag\\
        =&\int_{\partial M}f^b\langle\nabla_{\nabla f+\chi\nu}\nabla v,\nu\rangle
        -b\int_Mv^{b-1}\langle\nabla_{\nabla v}\nabla v,\nabla v\rangle\notag\\
        =&\int_{\partial M}f^b\left(
        \langle\nabla f,\nabla \chi\rangle-\langle\nabla v,\nabla_{\nabla f}\nu\rangle+\chi\nabla^2v(\nu,\nu)
        \right)
        -b\int_Mv^{b-1}\langle\nabla_{\nabla v}\nabla v,\nabla v\rangle\notag\\
        =&\int_{\partial M}f^b\left(
        \langle\nabla f,\nabla \chi\rangle
        -\Pi(\nabla f,\nabla f)
        +\chi(\Delta v-\Delta f-H\chi)
        \right)
        -b\int_Mv^{b-1}\langle\nabla_{\nabla v}\nabla v,\nabla v\rangle\notag\\
        =&\int_{\partial M}f^b
        \left(
        2\langle\nabla f,\nabla \chi\rangle
        -\Pi(\nabla f,\nabla f)
        +(a+1)\chi f^{-1}(|\nabla f|^2+\chi^2)
        +bf^{-1}\chi|\nabla f|^2
        -H\chi^2
        \right)\notag\\
        &-b\int_Mv^{b-1}\langle\nabla_{\nabla v}\nabla v,\nabla v\rangle\notag\\
        \leq&(2(q-1)\lambda-1)\int_{\partial M}f^b|\nabla f|^2
        +\left(a+1+b+2(a+1-aq)\right)\int_{\partial M}f^b\frac{\chi}{f}|\nabla f|^2
        +(a+1)\int_{\partial M}f^b\frac{\chi}{f}\chi^2\notag\\
        &-(n-1)\int_{\partial M}f^b\chi^2
        -b\int_M v^{b-1}\langle\nabla_{\nabla v}\nabla v,\nabla v\rangle\notag\\
        =&(2(q-1)\lambda-1)\int_{\partial M}f^b|\nabla f|^2
        +\frac{a+1+b+2(a+1-aq)}{a}\lambda\int_{\partial M}f^b|\nabla f|^2\notag\\
        &-\frac{a+1+b+2(a+1-aq)}{a}\int_{\partial M}f^{b+a-aq}|\nabla f|^2
        +\frac{a+1}{a}\lambda\int_{\partial M}f^b\chi^2
        -\frac{a+1}{a}\int_{\partial M}f^{b+a-aq}\chi^2\notag\\
        &-(n-1)\int_{\partial M}f^b\chi^2
        -b\int_M v^{b-1}\langle\nabla_{\nabla v}\nabla v,\nabla v\rangle\notag\\
        =&\left(\frac{a+b+3}{a}\lambda-1\right)
        \int_{\partial M}f^b|\nabla f|^2
        +\left(\frac{a+1}{a}\lambda-(n-1)\right)
        \int_{\partial M}f^b\chi^2\notag\\
        &+\left(2(q-1)-\frac{a+b+3}{a}\right)
        \int_{\partial M}f^{b+a-aq}|\nabla f|^2
        -\frac{a+1}{a}\int_{\partial M}f^{b+a-aq}\chi^2
        -b\int_M v^{b-1}\langle\nabla_{\nabla v}\nabla v,\nabla v\rangle.
    \end{align}
    On the other hand, the right hand side of \cref{eq. Bochner 1} could be written as
    \begin{align}\label{eq. RHS of 1}
         &\int_M v^b\left(
        \left|\nabla^2 v-\frac{\Delta v}{n}g\right|^2
        +\mathrm{Ric}(\nabla v,\nabla v)
        \right)
        +\frac{(a+1)^2}{n}\int_M w v^{b-2}|\nabla v|^4
        -(a+1)\int_M v^{b-2}|\nabla v|^4\notag\\
        &+2(a+1)\int_M v^{b-1}\langle\nabla_{\nabla v}\nabla v,\nabla v\rangle.
    \end{align}
    Therefore, substituting \cref{eq. Bochner 1} by \cref{eq. LHS of 1} and \cref{eq. RHS of 1}, we have
    \begin{align}\label{eq. equation 1}
        &\left(\frac{a+b+3}{a}\lambda-1\right)
        \int_{\partial M}f^b|\nabla f|^2
        +\left(\frac{a+1}{a}\lambda-(n-1)\right)
        \int_{\partial M}f^b\chi^2\notag\\
        &+\left(2(q-1)-\frac{a+b+3}{a}\right)\int_{\partial M}f^{b+a-aq}|\nabla f|^2\notag
        -\frac{a+1}{a}\int_{\partial M}f^{b+a-aq}\chi^2\\
        \geq & \int_M v^b\left(
        \left|\nabla^2 v-\frac{\Delta v}{n}g\right|^2
        +\mathrm{Ric}(\nabla v,\nabla v)
        \right)
        +\left(\frac{(a+1)^2}{n}-(a+1)\right)\int_M v^{b-2}|\nabla v|^4\notag\\
        &+(2(a+1)+b)\int_M v^{b-1}\langle\nabla_{\nabla v}\nabla v,\nabla v\rangle.
    \end{align}
    \textbf{Step 2:} Multiply both sides of \cref{eq. equation of v on M} by $v^b$ and integrate it over $M$, we  have
    \begin{align}\label{eq. "Bochner" 2}
        \int_{M}v^b(\Delta v)^2
        =(a+1)\int_M v^{b-1}|\nabla v|^2\Delta v
        .
    \end{align}
    It follows from \cref{eq. equation of v on M} that the left hand side of \cref{eq. "Bochner" 2} could be written as
    \begin{align}\label{eq. LHS of 2}
        &\int_{M}v^b(\Delta v)^2
        =\int_M \mathrm{div}(v^b\Delta v\nabla v)
        -\langle \nabla v^b,\nabla v\rangle\Delta v
        -\langle\nabla \Delta v,\nabla v\rangle v^b\notag\\
        =&(a+1)\int_{\partial M}f^{b-1}(|\nabla f|^2+\chi^2)\chi
        -(a+1)b\int_Mv^{b-2}|\nabla v|^4\notag\\
       & +(a+1)\int_M v^{b-2}|\nabla v|^4
        -2(a+1)\int_M v^{b-1}\langle\nabla_{\nabla v}\nabla v,\nabla v\rangle\notag\\
        =&\frac{a+1}{a}\lambda\int_{\partial M}f^b|\nabla f|^2
        +\frac{a+1}{a}\lambda\int_{\partial M}f^b\chi^2
        -\frac{a+1}{a}\int_{\partial M}f^{b+a-aq}|\nabla f|^2
        -\frac{a+1}{a}\int_{\partial M}f^{b+a-aq}\chi^2\notag\\
        &+(a+1)(1-b)\int_M v^{b-2}|\nabla v|^4
        -2(a+1)\int_M v^{b-1}\langle\nabla_{\nabla v}\nabla v,\nabla v\rangle.
    \end{align}
    Therefore, inserting \cref{eq. LHS of 2} into \cref{eq. "Bochner" 2},  we have
    \begin{align}\label{eq. equation 2}
        &\frac{a+1}{a}\lambda\int_{\partial M}f^b|\nabla f|^2
        +\frac{a+1}{a}\lambda\int_{\partial M}f^b\chi^2
        -\frac{a+1}{a}\int_{\partial M}f^{b+a-aq}|\nabla f|^2
        -\frac{a+1}{a}\int_{\partial M}f^{b+a-aq}\chi^2\notag\\
        =&\left(
        (a+1)^2+(a+1)(b-1)
        \right)\int_M v^{b-2}|\nabla v|^4
        +2(a+1)\int_M v^{b-1}\langle\nabla_{\nabla v}\nabla v,\nabla v\rangle.
    \end{align}
    \textbf{Step 3:} Now consider $\cref{eq. equation 1}+c\cref{eq. equation 2}$:
    \begin{align*}
        &\left(\frac{a+b+3}{a}\lambda-1+\frac{a+1}{a}\lambda c\right)
        \int_{\partial M}f^b|\nabla f|^2
        +\left(\frac{a+1}{a}\lambda-(n-1)+\frac{a+1}{a}\lambda c\right)
        \int_{\partial M}f^b\chi^2\notag\\
        &+\left(2(q-1)-\frac{a+b+3}{a}-\frac{a+1}{a}c\right)
        \int_{\partial M}f^{b+a-aq}|\nabla f|^2\notag
        -(\frac{a+1}{a}+\frac{a+1}{a}c)\int_{\partial M}f^{b+a-aq}\chi^2\\
        \geq & \int_M v^b\left(
        \left|\nabla^2 v-\frac{\Delta v}{n}g\right|^2
        +\mathrm{Ric}(\nabla v,\nabla v)
        \right)
        +\left((\frac{1}{n}+c)(a+1)^2
        +(cb-c-1)(a+1)\right)\int_M v^{b-2}|\nabla v|^4\notag\\
        &+\left(2(c+1)(a+1)+b\right)\int_M v^{b-1}\langle\nabla_{\nabla v}\nabla v,\nabla v\rangle.
    \end{align*}
   Define $\beta$ by setting $b=-\beta(a+1)$, and $x:=\frac{1}{a}$ and choose $c=-1+\frac{\beta}{2}$ to eliminate the  term $\int_M v^{b-1}\langle \nabla_{\nabla v}\nabla v,\nabla v\rangle$, we obtain
    \begin{align}\label{eq. Final ineq.}
        &\left(\frac{4-\beta}{2}\lambda x-(1+\frac{\beta\lambda}{2})\right)
        \int_{\partial M}f^b|\nabla f|^2
        +\left(\frac{\beta\lambda}{2}(x+1)-(n-1)\right)
        \int_{\partial M}f^b\chi^2\notag\\
        &+\left(-\frac{4-\beta}{2}x+2(q-1)+\frac{1}{2}\beta\right)
        \int_{\partial M}f^{b+a-aq}|\nabla f|^2\notag
        -\frac{1}{2}\beta(x+1)\int_{\partial M}f^{b+a-aq}\chi^2\\
        \geq & \int_M v^b\left(
        \left|\nabla^2 v-\frac{\Delta v}{n}g\right|^2
        +\mathrm{Ric}(\nabla v,\nabla v)
        \right)\notag\\
        &+\frac{1}{x^2}(x+1)\left(
        -(\frac{1}{2}\beta^2-\beta+\frac{n-1}{n})(x+1)+\frac{1}{2}\beta
        \right)
        \int_M v^{b-2}|\nabla v|^4.
    \end{align}
    \textbf{Step 4:} Now we verify the condition (1) in \cref{thm. C. Ricci}: Take $\beta=1, x=\frac{\beta+4(q-1)}{4-\beta}=\frac{1+4(q-1)}{3}$ such that $-\frac{4-\beta}{2}x+2(q-1)+\frac{1}{2}\beta=0$. Equivalently, $a=\frac{3}{1+4(q-1)}$ and $b=-(a+1)=-\frac{4q}{4q-3}$. Then \cref{eq. Final ineq.} turns out to be
    \begin{align}\label{eq. Final (a)}
        &\left(2(q-1)\lambda-1 \right)\int_{\partial M}f^b|\nabla f|^2
        +\left(\frac{2}{3}q\lambda-(n-1)\right)\int_{\partial M}f^b\chi^2
        -\frac{2}{3}q\int_{\partial M}f^{b+a-aq}\chi^2\notag\\
        \geq& \int_M v^b\left(
        \left|\nabla^2 v-\frac{\Delta v}{n}g\right|^2
        +\mathrm{Ric}(\nabla v,\nabla v)
        \right)
        +\left(\frac{3}{1+4(q-1)}\right)^2
        \frac{4}{3}q
        \left(\frac{1}{2}-\frac{2(n-2)}{3n}q\right)
        \int_M v^{b-2}|\nabla v|^4.
    \end{align}
    Then the condition (1) in \cref{thm. C. Ricci} implies that the left hand side of \cref{eq. Final (a)} is less or equal to zero, while the right hand side of \cref{eq. Final (a)} is larger or equal to zero. It follows that $0\equiv\chi=\frac{\partial v}{\partial \nu}$. Hence by \cref{eq. equation of v on M} we deduce that $v|_{\partial M}=f$ is constant, and $u$ is a harmonic function in $M$ with constant boundary value on $\partial M$. Therefore $u$ is constant in $M$.

    \textbf{Step 5:} Now we verify the condition (2) in \cref{thm. C. Ricci}: By analyzing the range of $x$ within which the left hand side of \cref{eq. Final ineq.} is less or equal to zero and the right hand side of \cref{eq. Final ineq.} is greater or equal to zero, we obtain the optimal value of $\beta$ as $\beta=\frac{4q+2}{4q+1}$, $x=\frac{\beta+4(q-1)}{4-\beta}=\frac{8q^2-4q-1}{6q+1}$. Equivalently, we choose $a=\frac{6q+1}{8q^2-4q-1}$ and $b=-\frac{4q+2}{4q+1}(a+1)=-\frac{4q(2q+1)}{8q^2-4q-1}$. Then \cref{eq. Final ineq.} turns out to be
    \begin{align}\label{eq. Final (b)}
        &\left(2(q-1)\lambda-1\right)
        \int_{\partial M}f^b|\nabla f|^2
        +\left(\frac{2q(2q+1)}{6q+1}\lambda-(n-1)\right)
        \int_{\partial M}f^b\chi^2
        -\frac{2q(2q+1)}{6q+1}\int_{\partial M}f^{b+a-aq}\chi^2\notag\\
        \geq & \int_M v^b\left(
        \left|\nabla^2 v-\frac{\Delta v}{n}g\right|^2
        +\mathrm{Ric}(\nabla v,\nabla v)
        \right)\notag\\
        &+\left(\frac{6q+1}{8q^2-4q-1}\right)^2
        \left(\frac{2q(4q+1)}{6q+1}\right)
        \left(
        \frac{-4(n-2)q^2+2(n+1)q+n}{(6q+1)n}
        \right)
        \int_M v^{b-2}|\nabla v|^4.
    \end{align}
    Then the condition (2) in \cref{thm. C. Ricci} implies that the left hand side of \cref{eq. Final (b)} is less or equal to zero, while the right hand side of \cref{eq. Final (b)} is larger or equal to zero. As before, we could conclude that $u$ is a constant function on $M$.
\end{proof}
 \section{Wang's conjecture on warped product manifolds}\label{sec3}
 In this section, we investigate \cref{conj. Wang} on warped product manifolds and prove \cref{thm. C. warped product}.
 
 We shall first establish some identities that hold on general Riemannian manifolds.
\begin{lemma}\label{cor.div 1}
    Let $(M^n,g)$ be a Riemannian manifold. For any $v\in C^{\infty}(M)$ and constant $d>0$, there holds
    \begin{align}\label{eq. div 1}
        \mathrm{div}\left(\nabla_{\nabla v}\nabla v-\frac{\Delta v}{d}\nabla v\right)
        =\left|\nabla ^2 v-\frac{\Delta v}{n}g\right|^2
        +\left(\frac{1}{n}-\frac{1}{d}\right)(\Delta v)^2
        +\left(1-\frac{1}{d}\right)
        \langle\nabla \Delta v,\nabla v\rangle+\mathrm{Ric}(\nabla v,\nabla v).
    \end{align}
\end{lemma}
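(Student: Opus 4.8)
The identity is a divergence-form repackaging of the Bochner formula, so the plan is to compute the divergence of each of the two vector fields separately and then reorganize using the traceless decomposition of the Hessian. I would work in a local orthonormal frame (abstract index notation) with $v_i=\nabla_i v$ and $v_{ij}=\nabla_i\nabla_j v$, so that $\nabla_{\nabla v}\nabla v$ is the vector field with components $v_{ij}v^j$, and no hypotheses beyond smoothness of $v$ are needed since the statement is purely local.

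First I would handle the leading term. The product rule gives
\[
\mathrm{div}(\nabla_{\nabla v}\nabla v)=\nabla^i(v_{ij}v^j)=(\nabla^i v_{ij})v^j+v_{ij}v^{ij},
\]
where the second summand is exactly $|\nabla^2 v|^2$ by symmetry of the Hessian. The one genuine input is the Ricci commutation identity applied to the one-form $\nabla v$, which yields $\nabla^i v_{ij}=\Delta(\nabla_j v)=\nabla_j(\Delta v)+\mathrm{Ric}_{jm}v^m$; contracting with $v^j$ produces $(\nabla^i v_{ij})v^j=\langle\nabla\Delta v,\nabla v\rangle+\mathrm{Ric}(\nabla v,\nabla v)$. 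Hence
\[
\mathrm{div}(\nabla_{\nabla v}\nabla v)=|\nabla^2 v|^2+\langle\nabla\Delta v,\nabla v\rangle+\mathrm{Ric}(\nabla v,\nabla v),
\]
which is just $\tfrac12\Delta|\nabla v|^2$, i.e. the Bochner formula in disguise.

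The second term is elementary: the product rule gives
\[
\mathrm{div}\!\left(\tfrac{\Delta v}{d}\nabla v\right)=\tfrac1d\langle\nabla\Delta v,\nabla v\rangle+\tfrac1d(\Delta v)^2.
\]
Subtracting the two computations, the coefficient of $\langle\nabla\Delta v,\nabla v\rangle$ becomes $1-\tfrac1d$ and I pick up a term $-\tfrac1d(\Delta v)^2$. Finally I rewrite $|\nabla^2 v|^2$ via $\langle\nabla^2 v,g\rangle=\Delta v$ and $|g|^2=n$, which give
\[
|\nabla^2 v|^2=\Bigl|\nabla^2 v-\tfrac{\Delta v}{n}g\Bigr|^2+\tfrac1n(\Delta v)^2;
\]
combining the two $(\Delta v)^2$ contributions then yields precisely the coefficient $\tfrac1n-\tfrac1d$ and completes the identity.

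I do not expect a real obstacle here, since the lemma holds on any Riemannian manifold. The only point demanding care is fixing the sign and contraction convention in the Ricci commutation step; I would pin it down by requiring consistency with the Bochner formula (so that $\mathrm{div}(\nabla_{\nabla v}\nabla v)=\tfrac12\Delta|\nabla v|^2$ reproduces the standard $|\nabla^2 v|^2+\langle\nabla\Delta v,\nabla v\rangle+\mathrm{Ric}(\nabla v,\nabla v)$ with the positive Ricci sign).
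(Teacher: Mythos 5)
Your computation is correct and is exactly what the paper means by its one-line proof ("a straightforward corollary of the Bochner formula"): expanding $\mathrm{div}(\nabla_{\nabla v}\nabla v)$ via the product rule and the Ricci commutation identity recovers $\tfrac12\Delta|\nabla v|^2$, and the rest is the elementary divergence of $\tfrac{\Delta v}{d}\nabla v$ together with the traceless splitting of $|\nabla^2 v|^2$. Same approach, just written out in full.
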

\begin{proof}
    This is a straightforward corollary of the Bochner formula.
\end{proof}
\begin{lemma}\label{cor. div 1w}
    Let $(M^n,g)$ be a Riemannian manifold. Assume in addition that $(M^n,g)$ admits a smooth function $w$ such that $\nabla w$ is a closed conformal vector filed on $M$, i.e. $\nabla ^2 w=\frac{\Delta w}{n}g$, then for any $v\in C^{\infty}(M)$ and constant $d>0$ there holds
    \begin{align}\label{eq. div 1w}
        \mathrm{div}\left(\nabla_{\nabla w}\nabla v-\frac{\Delta v}{d}\nabla w\right)
        =\left(\frac{1}{n}-\frac{1}{d}\right)\Delta v\Delta w
        -\left(1-\frac{1}{n}\right)\langle\nabla \Delta w,\nabla v\rangle
        +\left(1-\frac{1}{d}\right)\langle\nabla \Delta v,\nabla w\rangle.
    \end{align}
\end{lemma}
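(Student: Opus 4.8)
The plan is to prove the identity by a direct computation that parallels the proof of \cref{cor.div 1}, expanding the divergence in a local frame and splitting the vector field $\nabla_{\nabla w}\nabla v-\frac{\Delta v}{d}\nabla w$ into its two summands. Writing $w_i=\nabla_i w$, $v_i=\nabla_i v$ and using that the Hessian is symmetric, the first summand has components $w^j\nabla_j\nabla^i v=w^j v_j{}^i$, so its divergence is $\nabla_i(w^j v_j{}^i)=(\nabla_i w^j)v_j{}^i+w^j\nabla_i v_j{}^i$. The conformal hypothesis $\nabla^2 w=\frac{\Delta w}{n}g$ gives $\nabla_i w^j=\frac{\Delta w}{n}\delta_i^j$, so the first term collapses to $\frac{\Delta w}{n}\Delta v$.

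For the remaining piece $w^j\nabla_i v_j{}^i$ I would invoke the commutation identity underlying \cref{cor.div 1}, namely the divergence-of-Hessian formula $\nabla^i(\nabla_i\nabla_j v)=\nabla_j\Delta v+\mathrm{Ric}_{jk}\nabla^k v$; contracting with $w^j$ produces $\langle\nabla w,\nabla\Delta v\rangle+\mathrm{Ric}(\nabla w,\nabla v)$. The second summand $-\frac{\Delta v}{d}\nabla w$ is handled by the product rule together with $\mathrm{div}(\nabla w)=\Delta w$, giving $-\frac1d\bigl(\langle\nabla\Delta v,\nabla w\rangle+\Delta v\,\Delta w\bigr)$. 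Collecting everything yields
\begin{align*}
\mathrm{div}\left(\nabla_{\nabla w}\nabla v-\frac{\Delta v}{d}\nabla w\right)
=\left(\frac1n-\frac1d\right)\Delta v\,\Delta w
+\left(1-\frac1d\right)\langle\nabla\Delta v,\nabla w\rangle
+\mathrm{Ric}(\nabla w,\nabla v),
\end{align*}
which already matches the stated formula except that the Ricci term must be rewritten.

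The crucial, and I expect the only genuinely subtle, step is to convert $\mathrm{Ric}(\nabla w,\nabla v)$ into $-\left(1-\frac1n\right)\langle\nabla\Delta w,\nabla v\rangle$. Here I would exploit the conformal condition a second time: applying the very same divergence-of-Hessian identity to $w$ itself gives $\nabla^i(\nabla_i\nabla_j w)=\nabla_j\Delta w+\mathrm{Ric}_{jk}\nabla^k w$, whereas differentiating $\nabla_i\nabla_j w=\frac{\Delta w}{n}g_{ij}$ directly gives $\nabla^i(\nabla_i\nabla_j w)=\frac1n\nabla_j\Delta w$. Comparing these two expressions forces $\mathrm{Ric}(\nabla w,\cdot)=-\left(1-\frac1n\right)\nabla\Delta w$ as one-forms, and pairing with $\nabla v$ supplies exactly the desired replacement; substituting it into the displayed identity finishes the proof. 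The main point requiring care is the sign and ordering in the Ricci identity when commuting the two covariant derivatives that generate the curvature term, since the same convention governs both appearances of $\mathrm{Ric}$; once it is fixed consistently, the two curvature contributions combine to produce precisely the closed-form right-hand side claimed in the lemma.
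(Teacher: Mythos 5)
Your proof is correct and follows essentially the same route as the paper: expand the divergence, use $\nabla^2 w=\frac{\Delta w}{n}g$ to reduce $\langle\nabla^2 v,\nabla^2 w\rangle$ to $\frac{1}{n}\Delta v\Delta w$, apply the contracted commutation identity $\mathrm{div}(\nabla^2 v)=\nabla\Delta v+\mathrm{Ric}(\nabla v,\cdot)$, and convert $\mathrm{Ric}(\nabla w,\cdot)$ into $-\frac{n-1}{n}\nabla\Delta w$. The only cosmetic difference is that you obtain this last identity by contracting the divergence-of-Hessian formula applied to $w$ itself, whereas the paper traces the curvature-operator identity for closed conformal fields; both yield the same conclusion.
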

\begin{proof}
    Notice that
    \begin{align*}
        \mathrm{div}(\nabla_{\nabla w}\nabla v)
        &=\frac{1}{2}\langle \nabla^2 v,\mathcal{L}_{\nabla w}g\rangle
        +\langle\nabla\Delta v,\nabla w\rangle
        +Ric(\nabla v,\nabla w)\\
        &=\frac{1}{n}\Delta v\Delta w
        +\langle\nabla\Delta v,\nabla w\rangle
        +Ric(\nabla v,\nabla w),
    \end{align*}
    where we used Ricci's identity
    \begin{align*}
        \mathrm{div}(\nabla^2 v)=\nabla \Delta v+\mathrm{Ric}(\nabla v,\cdot).
    \end{align*}
    The hypothesis on $w$ implies that the curvature operator $R$ satisfies
    \begin{align*}
        R(X,Y)\nabla w=\left\langle X, \frac{\nabla \Delta w}{n}\right\rangle Y
        -\left\langle Y, \frac{\nabla \Delta w}{n}\right\rangle X,\quad \forall X,Y\in T_p(M).
    \end{align*}
    It follows that 
    \begin{align*}
        Ric(\nabla w,\cdot)=-\frac{n-1}{n}\nabla \Delta w.
    \end{align*}
    Therefore, we derive
    \begin{align*}
        \mathrm{div}(\nabla_{\nabla w}\nabla v)
        =\frac{1}{n}\Delta v\Delta w
        +\langle\nabla\Delta v,\nabla w\rangle
        -\frac{n-1}{n}\langle\nabla \Delta w,\nabla v\rangle.
    \end{align*}
    Hence the desired identity follows.
\end{proof}
\begin{remark}
    Manifolds endowed with a closed conformal vector field could be characterized as certain warped product manifolds. See for example \cite[Theorem 1.1]{CMM12}.
\end{remark}

Now we are ready to prove \cref{thm. C. warped product}.
Roughly speaking, the starting observation is that Escobar's argument in the critical power case \cite{Esc90} could be interpreted in the framework of the P-function method. Then we regard the equation \cref{eq. equation of u on M} as the  critical power case in a $d-$dimensional space, where \begin{align}\label{eq. intrinsic dimension}
    d:=\frac{2q}{q-1}\geq n.
\end{align}
 Then we modify Escobar's argument \cite{Esc90} to fit in this "critical case", and use the boundary condition to tackle the emerging terms in this case. We shall see that the choice of the parameters is natural from the viewpoint of this P-function strategy. After finishing our proof, we shall review and compare our choice of the parameters with those of Gu-Li \cite{GL25}.

\begin{proof}[Proof of \cref{thm. C. warped product}:] 

\textbf{Step 0:} First, we derive some properties of the warping factor $\rho$ that will be used throughout our proof.

The curvature hypothesis $Ric_g\geq 0$ implies 
        $-(n-1)\frac{\rho''}{\rho}\geq 0$.
        Combining with $\rho'(0)=1$, we get 
        \begin{align}\label{ineq. rho prime r}
            \rho'(R)
            \leq \rho'(r)
            \leq  1,\quad 
            \forall r\in[0,R].
        \end{align}
        Moreover, the Ricci curvature hypothesis also implies
        \begin{align*}
        (\frac{\rho'}{\rho})'=\frac{\rho''}{\rho}
        -(\frac{\rho'}{\rho})^2
        \leq -(\frac{\rho'}{\rho})^2,\quad 
    \end{align*}
   Combining with $\lim_{r\to 0}\frac{\rho'(r)}{\rho(r)}=+\infty$, we obtain
    \begin{align*}
        \frac{\rho'(r)}{\rho(r)}\leq \frac{1}{r},
        \quad 
        \forall r\in(0,R].
    \end{align*}
    Then the boundary curvature hypothesis implies that 
    \begin{align}\label{ineq. R leq 1}
        R\leq 1.
    \end{align}

    Finally, we have $\rho'(r)\geq \rho'(R)>0$, so 
    \begin{align}\label{ineq. rho r leq rho R}
        \rho(r)\leq \rho(R),\quad 
        \forall r\in[0,R].
    \end{align}

\textbf{Step 1:}    Consider the intrinsic dimension $d:=\frac{2q}{q-1}$ so that $q=\frac{d}{d-2}$. Then 
\begin{align*}
    d\geq n\Leftrightarrow q\leq \frac{n}{n-2}.
\end{align*}
Denote  $\nu$ to be the unit outer normal vector of $\partial M$.  Let  $v:=u^{-\frac{2}{d-2}}$, $\chi:=\frac{\partial v}{\partial \nu}, f:=v|_{\partial M}$, then $v$ satisfies
    \begin{align}\label{eq. PDE of v on warped}
        \begin{cases}
            \Delta v=\frac{d}{2}v^{-1}|\nabla v|^2 &\mathrm{in}\ M,\\
            \chi=\frac{2}{d-2}(\lambda f-1) &\mathrm{on}\ \partial M.
        \end{cases}
    \end{align}
    Consider the P-function $P:=v^{-1}|\nabla v|^2=\frac{2}{d}\Delta v$, which satisfies $P\equiv constant$ if $u$ is the model solution \cref{eq. model solution}. We shall derive a divergence-type equation of $P$ (i.e. \cref{eq. div 1_1-d}).

    It's straightforward to see from \cref{eq. PDE of v on warped} that
    \begin{align}\label{eq. nabla Delta v}
        \frac{d}{2}\nabla P=\nabla \Delta v
        =dv^{-1}
        \left(\nabla_{\nabla v}\nabla v-\frac{\Delta v}{d}\nabla v\right).
    \end{align}

    Applying the Bochner formula to $vP=|\nabla v|^2$, it's straightforward to see that
    \begin{align*}
        \frac{1}{2}v\Delta P +\frac{1}{2}P\Delta v+\langle\nabla P,\nabla v\rangle
        =\left|\nabla^2 v-\frac{\Delta v}{n}g\right|^2
        +\frac{1}{n}(\Delta v)^2
        +\langle\nabla \Delta v,\nabla v\rangle
        +Ric(\nabla v,\nabla v)
        .
    \end{align*}
    Now, using $\Delta v=\frac{d}{2}P$ and rearranging the above equation, we derive
    \begin{align*}
        \frac{1}{2}v\Delta P+\frac{2-d}{2}\langle\nabla P,\nabla v\rangle
        =\left|\nabla^2 v-\frac{\Delta v}{n}g\right|^2
        +\left(\frac{1}{n}-\frac{1}{d}\right)(\Delta v)^2
        +Ric(\nabla v,\nabla v)
        .
    \end{align*}
    Multiply both sides by $v^{1-d}$ to get the following divergence-type equation:
    \begin{align}\label{eq. div 1_1-d}
        \frac{1}{2}div(v^{2-d}\nabla P)
        =v^{1-d}\left(
        \left|\nabla^2 v-\frac{\Delta v}{n}g\right|^2
        +\left(\frac{1}{n}-\frac{1}{d}\right)(\Delta v)^2
        +Ric(\nabla v,\nabla v)
        \right).
    \end{align}
    Now use \cref{eq. nabla Delta v} to substitute $\nabla P$ in \cref{eq. div 1_1-d} and get
    \begin{align}\label{eq. warped div 1_1-d_new}
        \mathrm{div}\left(
        v^{1-d}\left(\nabla_{\nabla v}\nabla v-\frac{\Delta v}{d}\nabla v\right)
        \right)
        =v^{1-d}\left(
        \left|\nabla^2 v-\frac{\Delta v}{n}g\right|^2
        +\left(\frac{1}{n}-\frac{1}{d}\right)(\Delta v)^2
        +Ric(\nabla v,\nabla v)
        \right).
    \end{align}
    Integrating this equation over $M$ and using the boundary value condition \cref{eq. PDE of v on warped}, we obtain
    \begin{align}\label{eq. warped 1}
        &\int_{M}v^{1-d}\left(
        \left|\nabla^2 v-\frac{\Delta v}{n}g\right|^2
        +\left(\frac{1}{n}-\frac{1}{d}\right)(\Delta v)^2
        +Ric(\nabla v,\nabla v)
        \right)\notag\\
        =&\int_{\partial M}f^{1-d}
        \left(\nabla^2 v-\frac{\Delta v}{d}g\right)(\nabla v,\nu)\notag
        =\int_{\partial M}f^{1-d}
        \left(\nabla^2 v-\frac{\Delta v}{d}g\right)(\nabla f+\chi\nu,\nu)\\
        =&\int_{\partial M}f^{1-d}
        \left(\langle \nabla f,\nabla \chi\rangle
        -\frac{\rho'(R)}{\rho(R)}|\nabla f|^2\right)
        +\int_{\partial M}\chi f^{1-d}
        \left(\nabla^2 v-\frac{\Delta v}{d}g\right)(\nu,\nu)\notag\\
        =&\left(\frac{2}{d-2}\lambda-\frac{\rho'(R)}{\rho(R)}\right)
        \int_{\mathbb{S}^{n-1}}f^{1-d}|\nabla f|^2
        +\frac{2}{d-2}\lambda
        \int_{\partial M}f^{2-d}
        \left(\nabla^2 v-\frac{\Delta v}{d}g\right)(\nu,\nu)\notag\\
        &-\frac{2}{d-2}
        \int_{\partial M}f^{1-d}
        \left(\nabla^2 v-\frac{\Delta v}{d}g\right)(\nu,\nu),
    \end{align}
    where in the penultimate equality, we used $\langle \nabla f,\nabla_{\nabla f}\nu\rangle
    =\frac{\rho'(R)}{\rho(R)}|\nabla f|^2 $.

    \textbf{Step 2:} Consider the weight function 
        \begin{align}\label{eq. weight in warped case}
            w(x):=\int_{r(x)}^R\rho(t)\dd t.
        \end{align}
        Then $w$ satisfies: $w|_{\partial M}=0,\ \nabla w|_{\partial M}=-\rho(R)\nu,\ \Delta w=-n\rho'$. A crucial fact is that $\nabla w$ is a closed conformal vector field, i.e.
        \begin{align*}
            \nabla^2 w=\frac{\Delta w}{n}g\quad \text{ on }M.
        \end{align*}

    We shall utilize $w$ to tackle the last two terms in the right hand side of \cref{eq. warped 1}.

    \cref{cor. div 1w} and \cref{eq. nabla Delta v} yield that
        \begin{align}\label{eq. warped div hessian v}
            div\left(\nabla_{\nabla w}\nabla v-\frac{\Delta v}{d}\nabla w\right)
            =&\left(\frac{n}{d}-1\right)\rho'\Delta v
            +(n-1)\rho''\frac{\partial v}{\partial r}
            +\left(1-\frac{1}{d}\right)\langle\nabla \Delta v,\nabla w\rangle\notag\\
            =&\left(\frac{n}{d}-1\right)\rho'\Delta v
            +(n-1)\rho''\frac{\partial v}{\partial r}
            +(d-1)v^{-1}\left\langle
             \nabla_{\nabla w}\nabla v-\frac{\Delta v}{d}\nabla w,\nabla v\right\rangle.
        \end{align}
        Therefore,
        \begin{align}\label{eq. warped div 1w_2-d}
        &\mathrm{div}\left(
        v^{2-d}
        \left(\nabla_{\nabla w}\nabla v-\frac{\Delta v}{d}\nabla w\right)
        \right)\notag\\
        =&\left(\frac{n}{d}-1\right)
        \rho'v^{2-d}\Delta v
        +(n-1)\rho''v^{2-d}\frac{\partial v}{\partial r}
        +v^{1-d}
        \left\langle\nabla_{\nabla v}\nabla v-\frac{\Delta v}{d}\nabla v,\nabla w\right\rangle.
    \end{align}

    It follows from \cref{eq. warped div 1_1-d_new} and \cref{eq. warped div 1w_2-d} that
    \begin{align*}
        &wv^{1-d}\left(
        \left|\nabla^2 v-\frac{\Delta v}{n}g\right|^2
        +\left(\frac{1}{n}-\frac{1}{d}\right)(\Delta v)^2
        +Ric(\nabla v,\nabla v)
        \right)
        =w\mathrm{div}\left(
        v^{1-d}\left(\nabla_{\nabla v}\nabla v-\frac{\Delta v}{d}\nabla v\right)
        \right)\notag\\
        =&\mathrm{div}\left(
        wv^{1-d}
        \left(\nabla_{\nabla v}\nabla v-\frac{\Delta v}{d}\nabla v\right)
        \right)
        -v^{1-d}\left\langle\nabla_{\nabla v}\nabla v-\frac{\Delta v}{d}\nabla v,\nabla w\right\rangle\notag\\
        =&\mathrm{div}\left(
        wv^{1-d}\left(\nabla_{\nabla v}\nabla v-\frac{\Delta v}{d}\nabla v\right)
        \right)
        -\mathrm{div}\left(
        v^{2-d}
        \left(\nabla_{\nabla w}\nabla v-\frac{\Delta v}{d}\nabla w\right)
        \right)\\
        &+\left(\frac{n}{d}-1\right)\rho'v^{2-d}\Delta v
        +(n-1)\rho''v^{2-d}\frac{\partial v}{\partial r}.
    \end{align*}
    Integrating this equation over $M$ and noticing that $\nabla w|_{\partial M}=-\rho(R)\nu$, we get

    \begin{align}\label{eq. warped 1w_2-d}
        &\int_{M}wv^{1-d}\left(
        \left|\nabla^2 v-\frac{\Delta v}{n}g\right|^2
        +\left(\frac{1}{n}-\frac{1}{d}\right)(\Delta v)^2
        +Ric(\nabla v,\nabla v)
        \right)\notag\\
        =&\rho(R)\int_{\partial M}f^{2-d}\left(\nabla^2 v-\frac{\Delta v}{d}g\right)(\nu,\nu)
        +\left(\frac{n}{d}-1\right)
        \int_{M}\rho'v^{2-d}\Delta v
        +(n-1)\int_M\rho''v^{2-d}\frac{\partial v}{\partial r}.
    \end{align}

    Once again, \cref{eq. nabla Delta v} and \cref{cor. div 1w} imply that
    \begin{align*}
        \mathrm{div}\left(
        v^{1-d}
        \left(\nabla_{\nabla w}\nabla v-\frac{\Delta v}{d}\nabla w\right)
        \right)
        =\left(\frac{n}{d}-1\right)
        \rho'v^{1-d}\Delta v
        +(n-1)\rho''v^{1-d}\frac{\partial v}{\partial r}.
    \end{align*}
    Integrating this equation over $M$, we get
    \begin{align}\label{eq. warped 1w_1-d}
        \rho(R)\int_{\partial M}f^{1-d}
        \left(\nabla^2 v-\frac{\Delta v}{d}g\right)(\nu,\nu)
        =\left(1-\frac{n}{d}\right)
        \int_{M}\rho'v^{1-d}\Delta v
        -(n-1)\int_M \rho''v^{1-d}\frac{\partial v}{\partial r}.
    \end{align}

    Note that for $(M^n,g)=\mathbb{B}^n$ and $q=\frac{n}{n-2}$, \cref{eq. warped 1w_1-d} is exactly the Pohozaev identity \cite[Proposition 1.4]{Sch88} used in Escobar's proof of the critical power case \cite[Theorem 2.1]{Esc90}, whose validity comes from the fact that $\nabla w$ is a closed conformal vector field.

    Plugging \cref{eq. warped 1w_2-d} and \cref{eq. warped 1w_1-d} into \cref{eq. warped 1}, we derive that
    \begin{align}\label{eq. subkey integral identity}
        &\int_M \left(1-\frac{2}{d-2}\lambda\frac{1}{\rho(R)}w\right)
        v^{1-d}\left(
        \left|\nabla^2v-\frac{\Delta v}{n}g\right|^2
        +\left(\frac{1}{n}-\frac{1}{d}\right)(\Delta v)^2
        +Ric(\nabla v,\nabla v)
        \right)\notag\\
        =&\left(\frac{2}{d-2}\lambda-\frac{\rho'(R)}{\rho(R)}\right)\int_{\partial M}f^{1-d}|\nabla f|^2\notag\\
        &
        +\frac{2}{d-2}\lambda\frac{1}{\rho(R)}
        \left(1-\frac{n}{d}\right)\int_M \rho' v^{2-d}\Delta v
        -\frac{2}{d-2}\lambda\frac{1}{\rho(R)}(n-1)\int_M \rho''v^{2-d}\frac{\partial v}{\partial r}\notag\\
        &-\frac{2}{d-2}\frac{1}{\rho(R)}
        \left(1-\frac{n}{d}\right)\int_M \rho' v^{1-d}\Delta v
        +\frac{2}{d-2}\frac{1}{\rho(R)}(n-1)\int_M \rho'' v^{1-d}\frac{\partial v}{\partial r}.
    \end{align}

    Further notice that by divergence theorem and \cref{eq. PDE of v on warped}, we have
    \begin{align*}
            &\int_M \rho'' v^{2-d}\frac{\partial v}{\partial r}
            =\int_M div\left(\rho'v^{2-d}\nabla v\right)
            -\rho' v^{2-d}\Delta v-(2-d)\rho'v^{1-d}|\nabla v|^2\\
            =&\rho'(R)\int_{\partial M}f^{2-d}\chi
            +\frac{d-4}{d}\int_M \rho'v^{2-d}\Delta v\\
            =&\frac{d-4}{d}\int_M \left(\rho'-\rho'(R)\right)v^{2-d}\Delta v,
    \end{align*}
    where in the last line, we used $\int_{\partial M}f^{2-d}\chi
        =\int_{M}\mathrm{div}(v^{2-d}\nabla v)
        =\frac{4-d}{d}\int_{M}v^{2-d}\Delta v$.
    Similarly we have
    \begin{align}\label{eq. integral trick}
        \int_M \rho'' v^{1-d}\frac{\partial v}{\partial r}
            =\frac{d-2}{d}\int_M (\rho'-\rho'(R))v^{1-d}\Delta v.
    \end{align}
    Inserting these two formulas into \cref{eq. subkey integral identity}, we finally arrive at our key integral identity
    \begin{align}\label{eq. key integral identity}
        &\int_M \left(1-\frac{2}{d-2}\lambda\frac{1}{\rho(R)}w\right)
        v^{1-d}\left(
        \left|\nabla^2v-\frac{\Delta v}{n}g\right|^2
        +\left(\frac{1}{n}-\frac{1}{d}\right)(\Delta v)^2
        +Ric(\nabla v,\nabla v)
        \right)\notag\\
        =&\left(\frac{2}{d-2}\lambda-\frac{\rho'(R)}{\rho(R)}\right)\int_{\partial M}f^{1-d}|\nabla f|^2\notag\\
        &
        +\frac{2}{d-2}\lambda\frac{1}{\rho(R)}
        \int_M
        \left(
        \frac{(d-4)(n-1)}{d}\rho'(R)-\frac{dn-2d-3n+4}{d}\rho'
        \right)
         v^{2-d}\Delta v\notag
        \\
        &+\frac{2}{\rho(R)}\int_M 
        \left(\frac{(d-1)(n-2)}{d(d-2)}\rho'
        -\frac{n-1}{d}\rho'(R)
        \right)
        v^{1-d}\Delta v.
    \end{align}

    Notice that by \cref{ineq. R leq 1} and \cref{ineq. rho r leq rho R}, we have
    \begin{align*}
        w(r(x))\leq (R-r)\rho(R)\leq \rho(R),\quad
        \forall x\in M.
    \end{align*}
    It follows from this and our hypothesis on $\lambda$ that 
    \begin{align*}
        1-\frac{2}{d-2}\lambda \frac{w}{\rho(R)}
    \geq1-\frac{2}{d-2}\lambda
    \geq 1-C(n,q,\rho'(R))\geq 0,
    \end{align*}
    where $C(n,q,\rho'(R))\in(0,1]$ is defined in \cref{def. C}.
    Hence the left hand side of \cref{eq. key integral identity} is non-negative. We aim to prove that the right hand side of \cref{eq. key integral identity} is non-positive.
    
    For this purpose, note that $\frac{2}{d-2}\lambda-\frac{\rho'(R)}{\rho(R)}
    \leq C(n,q,\rho'(R))-\frac{\rho'(R)}{\rho(R)}
    \leq 0$, it suffices to show that
    \begin{align*}
        &\lambda
        \int_M
        \left(
        (d-4)(n-1)\rho'(R)-(dn-2d-3n+4)\rho'
        \right)
         v^{2-d}\Delta v\notag
        \\
        +&\int_M 
        \left((d-1)(n-2)\rho'
        -(d-2)(n-1)\rho'(R)
        \right)
        v^{1-d}\Delta v
        \leq 0.
    \end{align*}
    Furthermore,  using $\rho'\leq 1$ (see  \cref{ineq. rho prime r}), it suffices to show that
    \begin{align}\label{eq. warped goal}
        &\lambda
        \int_M
        \left(
        (d-4)(n-1)\rho'(R)-(dn-2d-3n+4)\rho'
        \right)
         v^{2-d}\Delta v\notag
        \\
        +&\left((d-1)(n-2)
        -(d-2)(n-1)\rho'(R)
        \right)
        \int_M 
        v^{1-d}\Delta v
        \leq 0.
    \end{align}
    The proof of \cref{eq. warped goal} will be established through steps 3 to 5.

    \textbf{Step 3:} We shall use the boundary condition \cref{eq. PDE of v on warped} to make \cref{eq. warped goal} homogeneous.
    
    The boundary  condition in \cref{eq. PDE of v on warped} implies that
    \begin{align}
        \int_{\partial M}f^{2-d}\chi
        =\int_{M}\mathrm{div}(v^{2-d}\nabla v)
        =\int_{M}(v^{2-d}\Delta v+(2-d)v^{1-d}|\nabla v|^2)
        =\frac{4-d}{d}\int_{M}v^{2-d}\Delta v,\notag\\
        \int_{\partial M}f^{1-d}\chi
        =\int_{M}\mathrm{div}(v^{1-d}\nabla v)
        =\int_{M}(v^{1-d}\Delta v+(1-d)v^{-d}|\nabla v|^2)
        =\frac{2-d}{d}\int_{M}v^{1-d}\Delta v.\label{eq. f to 1-d_ chi}
    \end{align}
    Therefore we have
    \begin{align*}
        \int_{\partial M}f^{1-d}\chi^2
        =\frac{2}{d-2}\lambda\int_{\partial M}f^{2-d}\chi
        -\frac{2}{d-2}\int_{\partial M}f^{1-d}\chi
        =\frac{2(4-d)}{d(d-2)}\lambda\int_{M} v^{2-d}\Delta v+\frac{2}{d}\int_{M}v^{1-d}\Delta v.
    \end{align*}
    Equivalently,
    \begin{align}\label{eq. homogenization}
        \int_{M}v^{1-d}\Delta v
        =\frac{d}{2}\int_{\partial M}f^{1-d}\chi^2
        +\frac{d-4}{d-2}\lambda\int_{M}v^{2-d}\Delta v.
    \end{align}
    It follows from \cref{eq. homogenization} that \cref{eq. warped goal}  is equivalent to
    \begin{align}\label{ineq. warped goal 2}
        &\left(
        (d-1)(n-2)-(d-2)(n-1)\rho'(R)
        \right)
        \frac{d}{2}\int_{\partial M} f^{1-d}\chi^2\notag\\
        +&\lambda
        \int_M
        \left(
        \frac{(d-1)(d-4)(n-2)}{d-2}
        -((d-n)(n-2)+(n-4)(n-1))\rho'
        \right)
        v^{2-d}\Delta v\leq 0.
    \end{align}

    \textbf{Step 4:} We shall derive a lower bound estimate of $\int_{\partial M}f^{1-d}\chi^2$. Using $\nabla^2 w=-\rho'g,\ \nabla w|_{\partial M}=-\rho(R)\nu$ and \cref{eq. PDE of v on warped}, we get
    \begin{align*}
        &\rho(R)\int_{\partial M}f^{1-d}\chi^2
        =-\int_{M}\mathrm{div}\left(v^{1-d}\langle \nabla v,\nabla w\rangle\nabla v\right)\notag\\
        =&-\int_{M}\left\{v^{1-d}\langle \nabla v,\nabla w\rangle\Delta v+(1-d)v^{-d}\langle \nabla v,\nabla w\rangle|\nabla v|^2
        +v^{1-d}\langle\nabla_{\nabla v}\nabla v,\nabla w\rangle
        +v^{1-d}\nabla^2 w(\nabla v,\nabla v)\right\}\notag\\
        =&-\int_{M}v^{1-d}
        \left\langle\nabla_{\nabla v}\nabla v-\frac{d-2}{d}\Delta v\nabla v,\nabla w\right\rangle
        +\frac{2}{d}\int_{M}\rho'v^{2-d}\Delta v\notag\\
        =&\int_{M}
        w\mathrm{div}\left(
        v^{1-d}
        \left(\nabla_{\nabla v}\nabla v-\frac{d-2}{d}\Delta v\nabla v\right)
        \right)
        +\frac{2}{d}\int_{M}\rho'v^{2-d}\Delta v,
    \end{align*}
    where in the last line, we used the divergence theorem and $w|_{\partial M}=0.$
    Furthermore, we use \cref{cor.div 1} and \cref{eq. nabla Delta v} to calculate the first integrand in the above integrals:
    \begin{align*}
        &w\mathrm{div}\left(
        v^{1-d}
        \left(\nabla_{\nabla v}\nabla v-\frac{d-2}{d}\Delta v\nabla v\right)
        \right)\\
        =&wv^{1-d}\left(
            \mathrm{div}
            \left(\nabla_{\nabla v}\nabla v-\frac{d-2}{d}\Delta v\nabla v\right)
            +(1-d)v^{-1}
            \left\langle \nabla_{\nabla v}\nabla v-\frac{d-2}{d}\Delta v\nabla v,\nabla v\right\rangle
        \right)\\
        =&wv^{1-d}\left(
        \left|\nabla^2 v-\frac{\Delta v}{n}g\right|^2
        +\left(\frac{1}{n}-\frac{d-2}{d}\right)
        (\Delta v)^2
        +2v^{-1}
        \left\langle \nabla_{\nabla v}\nabla v-\frac{\Delta v}{d}\nabla v,\nabla v\right\rangle\right.\\
        &\quad\quad\quad\left.
        +(1-d)v^{-1}\left\langle \nabla_{\nabla v}\nabla v-\frac{d-2}{d}\Delta v\nabla v,\nabla v\right\rangle
        \right)\\
        =&wv^{1-d}\left(
        \left|\nabla^2 v-\frac{\Delta v}{n}g\right|^2
        +(3-d)v^{-1}\langle \nabla_{\nabla v}\nabla v,\nabla v\rangle
        +\left(\frac{1}{n}-\frac{d-2}{d}+\frac{2(d-3)}{d}\right)(\Delta v)^2
        \right)\\
        =&wv^{1-d}\left(
        \left|\nabla^2 v-\frac{\Delta v}{n}g
        +\frac{3-d}{2}
        \left(
        \frac{\mathrm{d}v\otimes\mathrm{d}v}{v}
        -\frac{1}{n}\frac{|\nabla v|^2}{v}g
        \right)\right|^2
        +\frac{2n(d-n)+(n-3)(2n-3)}{nd^2}(\Delta v)^2
        \right)\geq 0,
    \end{align*}
    where in the last line, we used $|\frac{\mathrm{d}v\otimes\mathrm{d}v}{v}
        -\frac{1}{n}\frac{|\nabla v|^2}{v}g|^2=\frac{n-1}{n}v^{-2}|\nabla v|^4$, the equation \cref{eq. PDE of v on warped} and
    \begin{align*}
        v^{-1}\langle\nabla_{\nabla v}\nabla v,\nabla v\rangle
        =\left\langle \nabla^2 v-\frac{\Delta v}{n}g,
        \frac{\dd v\otimes \dd v}{v}-\frac{1}{n}\frac{|\nabla v|^2}{v}g\right\rangle
        +\frac{2}{dn}(\Delta v)^2.
    \end{align*}

    In conclusion, we showed that
    \begin{align}\label{ineq. integral of f chi squared}
        \rho(R)\int_{\partial M}f^{1-d}\chi^2
        \geq 
        \frac{2}{d}\int_{M}\rho'v^{2-d}\Delta v.
    \end{align}

    \textbf{Step 5:} We shall show that \cref{ineq. warped goal 2} holds true. By our assumption $1<q\leq \frac{2(n-1)\rho'(R)-(n-2)}{n-2}$ and $d=\frac{2q}{q-1}$, it is straightforward to see that 
    \begin{align}\label{ineq. rho' R lower bound}
        \rho'(R)\geq\frac{(d-1)(n-2)}{(d-2)(n-1)}.
    \end{align}
    Therefore, we could use \cref{ineq. integral of f chi squared} to get an upper bound of \cref{ineq. warped goal 2} as follows:
    \begin{align}\label{ineq. upper bound of 3_23}
        &\left(
        (d-1)(n-2)-(d-2)(n-1)\rho'(R)
        \right)
        \frac{d}{2}\int_{\partial M} f^{1-d}\chi^2\notag\\
        &+\lambda
        \int_M
        \left(
        \frac{(d-1)(d-4)(n-2)}{d-2}
        -((d-n)(n-2)+(n-4)(n-1))\rho'
        \right)
        v^{2-d}\Delta v \notag\\
        \leq
        &\int_M \left(
        \frac{(d-1)(d-4)(n-2)}{d-2}\lambda
        +A(d,n,R,\lambda)\rho'\right)v^{2-d}\Delta v,
    \end{align}
    where
    \begin{align*}
        A(d,n,R,\lambda):=\frac{(d-1)(n-2)}{\rho(R)}
        -(d-2)(n-1)\frac{\rho'(R)}{\rho(R)}
        -\left(
        (d-n)(n-2)+(n-4)(n-1)
        \right)\lambda.
    \end{align*}
    For $1<q\leq \frac{2(n-1)\rho'(R)-(n-2)}{n-2}$, define the constant alluded in our \cref{thm. C. warped product} as:
    \begin{align}\label{def. C}
         C(n,q,\rho'(R))
            :=&\begin{cases}
                \left(1-\frac{2(n-1)}{n-(n-2)q}(1-\rho'(R))\right),
                &\text{if } n=3 \text{ and }  
                 q>2,\\
                \left(1-\frac{2(n-1)}{n-(n-2)q}(1-\rho'(R))\right)
                \frac{(q-1)(n-(n-2)q)\rho'(R)}{C_1(3,q,\rho'(R))},
                &\text{if } n=3 \text{ and }  
                 1<q\leq 2,\\
                \left(1-\frac{2(n-1)}{n-(n-2)q}(1-\rho'(R))\right)
                \frac{(q-1)(n-(n-2)q)\rho'(R)}{C_1(n,q,\rho'(R))},
                &\text{if } n\geq 4,
            \end{cases}
    \end{align}
    where
    \begin{align*}
        C_1(n,q,\rho'(R)):=(q-1)(n-(n-2)q)\rho'(R)+(2-q)(q+1)(n-2)(1-\rho'(R))
    \end{align*}
    

    We claim that for $0<\lambda\leq \frac{1}{q-1}\frac{1}{\rho(R)}C(n,q,\rho'(R))$ (note that this range covers our hypothesis on $\lambda$ since $\rho(R)\leq 1$), there holds
    \begin{align*}
        \frac{(d-1)(d-4)(n-2)}{d-2}\lambda
        +A(d,n,R,\lambda)\rho'(r(x))\leq 0,\quad \forall x\in M,
    \end{align*}
    from which \cref{ineq. warped goal 2} follows since we have \cref{ineq. upper bound of 3_23}.

    In fact, sine $\rho'(R)\leq \rho'(r(x))\leq 1$ (see \cref{ineq. rho prime r}), it suffices to show that the following two algebraic inequalities hold for $0<\lambda\leq \frac{1}{q-1}\frac{1}{\rho(R)}C(n,q,\rho'(R))$:
    \begin{align}
        \frac{(d-1)(d-4)(n-2)}{d-2}\lambda
        +A(d,n,R,\lambda)\rho'(R)\leq 0,\label{ineq. algebraic 1}\\
        \frac{(d-1)(d-4)(n-2)}{d-2}\lambda
        +A(d,n,R,\lambda)\leq 0.\label{ineq. algebraic 2}
    \end{align}
    Now we verify these two inequalities. Denote $\epsilon:=1-\rho'(R)$. By \cref{ineq. rho' R lower bound}, we get
    \begin{align*}
        0\leq\epsilon\leq \frac{d-n}{(d-2)(n-1)}.
    \end{align*}
    $\bullet$ For \cref{ineq. algebraic 1},
        \begin{align*}
            &\frac{(d-1)(d-4)(n-2)}{d-2}\lambda
        +A(d,n,R,\lambda)\rho'(R)\\
        =&\frac{(d-1)(d-4)(n-2)}{d-2}(\rho'(R)+\epsilon)\lambda
       -((d-n)(n-2)+(n-4)(n-1))\rho'(R)\lambda\\
        &+
        ((d-1)(n-2)-(d-2)(n-1)(1-\epsilon))
        \frac{\rho'(R)}{\rho(R)}\\
        =&\frac{1}{d-2}\Big(2(d-n)\rho'(R)+(d-1)(d-4)(n-2)\epsilon \Big)\lambda
        -\Big(d-n-(d-2)(n-1)\epsilon\Big)
        \frac{\rho'(R)}{\rho(R)}\\
        \leq& 0,
        \end{align*}
        where the last inequality holds for $\lambda$ satisfying
        \begin{align*}
        \begin{cases}
            \lambda
            \leq &\!\!\!\!\!\!\!\!\!\!
            \frac{d-2}{2}\frac{1}{\rho(R)}
            \left(1-\frac{(d-2)(n-1)}{d-n}\epsilon\right)
            \frac{2(d-n)\rho'(R)}{2(d-n)\rho'(R)+(d-1)(d-4)(n-2)\epsilon}\\
            &\!\!\!\!\!\!\!\!\!\!\!
            = \frac{1}{q-1}\frac{1}{\rho(R)}
            \left(1-\frac{2(n-1)}{n-(n-2)q}(1-\rho'(R))\right)\frac{(q-1)(n-(n-2)q)\rho'(R)}{C_1(n,q,\rho'(R))},
            \text{ if } C_1(n,q,\rho'(R))> 0,\\
            \lambda>0,& \quad\quad\quad\quad\quad\quad\quad\quad\quad\quad\quad\quad\quad\quad\quad\quad\quad\quad\quad\quad\quad
            \ \ \text{ if } C_1(n,q,\rho'(R))\leq 0.
        \end{cases}
        \end{align*}        

        $\bullet$ For \cref{ineq. algebraic 2},
        \begin{align*}
         &\frac{(d-1)(d-4)(n-2)}{d-2}\lambda
        +A(d,n,R,\lambda)\\
        =&\frac{(d-1)(d-4)(n-2)}{d-2}\lambda
        -((d-n)(n-2)+(n-4)(n-1))\lambda\\
        &+\Big((d-1)(n-2)-(d-2)(n-1)(1-\epsilon)\Big)\frac{1}{\rho(R)}\\
        =&\frac{2(d-n)}{d-2}\lambda
        -\Big(d-n-(d-2)(n-1)\epsilon\Big)\frac{1}{\rho(R)}\\
        \leq &0,
        \end{align*}
        where the last inequality is equivalent to
        \begin{align*}
            \lambda\leq &
            \frac{d-2}{2}\frac{1}{\rho(R)}
            \left(
            1-\frac{(d-2)(n-1)}{d-n}\epsilon
            \right)
            =\frac{1}{q-1}\frac{1}{\rho(R)}
            \left(1-\frac{2(n-1)}{n-(n-2)q}(1-\rho'(R))\right).
        \end{align*}
        
        In conclusion, for $1<q\leq \frac{2(n-1)\rho'(R)-(n-2)}{n-2}$ and $\lambda $ satisfying
        \begin{align*}
            0<\lambda\leq
            \begin{cases}
                \frac{1}{q-1}\frac{1}{\rho(R)}
            \left(1-\frac{2(n-1)}{n-(n-2)q}(1-\rho'(R))\right)\times
            \min\{
            \frac{(q-1)(n-(n-2)q)\rho'(R)}{C_1(n,q,\rho'(R))},
            1
            \},
            &\text{if } C_1(n,q,\rho'(R))>0,\\
            \frac{1}{q-1}\frac{1}{\rho(R)}
            \left(1-\frac{2(n-1)}{n-(n-2)q}(1-\rho'(R))\right),
            &\text{if } C_1(n,q,\rho'(R))\leq 0,
            \end{cases}
        \end{align*}
        \cref{ineq. algebraic 1} and \cref{ineq. algebraic 2} are fulfilled.
        
        Furthermore, note that if $C_1(n,q,\rho'(R))\geq 0$, then
        \begin{align*}
            \min\left\{
            \frac{(q-1)(n-(n-2)q)\rho'(R)}{C_1(n,q,\rho'(R))},
            1
            \right\}
            =\frac{(q-1)(n-(n-2)q)\rho'(R)}{C_1(n,q,\rho'(R))}
            \Longleftrightarrow
            q\leq 2,
        \end{align*}
        which always holds when $n\geq 4$, since $1<q\leq \frac{2(n-1)\rho'(R)-(n-2)}{n-2}\leq \frac{n}{n-2}$. Therefore we could simplify the requirement on $\lambda$ as 
        \begin{align*}
            0<\lambda\leq \frac{1}{q-1}\frac{1}{\rho(R)}C(n,q,\rho'(R)),
        \end{align*}
        where $C(n,q,\rho'(R))$ is defined in \cref{def. C}.

        \textbf{Step 6:} In summary, we have proved through steps 3 to 5 that the left hand side of our key integral identity \cref{eq. key integral identity} is nonnegative, and the right hand side is nonpositive (see \cref{eq. warped goal} and \cref{ineq. warped goal 2}). This forces
        \begin{align}\label{eq. rigidity}
            0=&\int_M \left(1-\frac{2}{d-2}\lambda\frac{1}{\rho(R)}w\right)
        v^{1-d}\left(
        \left|\nabla^2v-\frac{\Delta v}{n}g\right|^2
        +\left(\frac{1}{n}-\frac{1}{d}\right)(\Delta v)^2
        +Ric(\nabla v,\nabla v)
        \right)\notag\\
        =&\left(\frac{2}{d-2}\lambda-\frac{\rho'(R)}{\rho(R)}\right)\int_{\partial M}f^{1-d}|\nabla f|^2\notag\\
        &
        +\frac{2}{d-2}\lambda\frac{1}{\rho(R)}
        \int_M
        \left(
        \frac{(d-4)(n-1)}{d}\rho'(R)-\frac{dn-2d-3n+4}{d}\rho'
        \right)
         v^{2-d}\Delta v\notag
        \\
        &+\frac{2}{\rho(R)}\int_M 
        \left(\frac{(d-1)(n-2)}{d(d-2)}\rho'
        -\frac{n-1}{d}\rho'(R)
        \right)
        v^{1-d}\Delta v.
        \end{align}

        If $\rho'(R)<1$, then $q\leq \frac{2(n-1)\rho'(R)-(n-2)}{n-2}<\frac{n}{n-2}$. It follows that $d:=\frac{2q}{q-1}>n$. 
        Then by \cref{eq. rigidity}, we get
         $(\frac{d}{2}v^{-1}|\nabla v|^2)^2=(\Delta v)^2\equiv 0$. Therefore $v$ is a constant function and so is $u$.

        If $\rho'(R)=1$, then $\rho'\equiv 1$ and 
        \begin{align*}
            \rho(r)=r,\quad \forall r\in[0,R].
        \end{align*}
        In other words, $(M^n,g)$ is isometric to a round Euclidean ball of radius $R$. If $R<1$, then by our hypothesis  $\lambda\leq \frac{1}{q-1}C(n,q,\rho'(R))\leq \frac{1}{q-1}=\frac{d-2}{2}$, \cref{eq. rigidity} and \cref{eq. warped goal} we have
        \begin{align*}
            \int_{\partial M}f^{1-d}|\nabla f|^2=0,
        \end{align*}
        so $v|_{\partial M}=f$ is a constant function. Therefore, $u$ is a harmonic function on $(M^n,g)$ with a constant boundary value. It follows that $u$ is a constant.

        Finally, the remaining case is that $(M^n,g)$ is isometric to $\mathbb{B}^n$, on which the solution $u$ is completely classified by deducing that the P-function $P=\frac{2}{d}\Delta v$ is a constant and some further calculation. You could refer to \cite[Page 3515-3516]{GL25} for the details.
\end{proof}
\begin{remark}
    Note that $0< C(n,q,\rho'(R))\leq 1$ and $C(n,q,1)\equiv 1, \forall n\geq 3, 1<q\leq \frac{n}{n-2}$, so we would recover \cref{thm. GL25} by taking $\rho(r)=r,\forall r\in[0,1]$. Moreover, in this model case, the computations are much simpler than those presented above and more streamlined compared to the original approach in \cite{GL25}.
\end{remark}
\begin{remark}
    We now provide a review of the parameter choices in our proof, accompanied by a comparison with those adopted by Gu-Li \cite{GL25}.
    They choose the power of the test function as $a=-\frac{q+1}{q-1}$ (see \cite[Section 3.1]{GL25}), and this is exactly $1-d$ in our \cref{eq. warped div 1_1-d_new}, which is chosen to get a divergence-type equation of the P-function \cref{eq. div 1_1-d}. 
    They choose the combination coefficient of their vector fields as $b=\frac{q-1}{2q}$ (see \cite[Section 3.2]{GL25}), and this is exactly $\frac{1}{d}$ in our \cref{eq. nabla Delta v}, which naturally appears when calculating the gradient of the P-function. 
    They finally choose the combination coefficient of the weight function as $\frac{2}{1+c}=\lambda(q-1)$ (see \cite[Section 3.3]{GL25}), which equals $\frac{2}{d-2}\lambda$ and appears naturally by our \cref{eq. warped 1} and \cref{eq. warped 1w_2-d}.
\end{remark}

It is worth comparing our \cref{thm. C. warped product} with Guo-Hang-Wang's \cref{thm. GHW21}.

First, note that on the warped product manifold $(M^n,g)=([0,R]\times \mathbb{S}^{n-1},dr^2+\rho(r)^2dS_{n-1}^2)$ with $Ric_g\geq 0$, the radial-direction sectional curvature is nonnegative:
\begin{align}\label{ineq. sec curvature}
    \mathrm{Sec}(\nabla r,X)=-\frac{\rho''}{\rho}\geq 0,
    \quad \forall X\in \mathfrak{X}(M)
    \text{ with } |X|=1, X\perp \nabla r.
\end{align}
In this circumstance, the weight function used in \cite{GHW21} is
\begin{align}\label{eq. GHW weight function}
    \tilde{w}(x)
    =d(x,\partial M)-\frac{d^2(x,\partial M)}{2}
    =-\frac{1}{2}r^2(x)+(R-1)r(x)+R-\frac{1}{2}R^2.
\end{align}
Combining \cref{ineq. sec curvature} with the second fundamental form hypothesis $\Pi\geq 1$, we could get
\begin{align*}
    \nabla^2 \tilde{w}=-\dd r\otimes \dd r+(R-r-1)\frac{\rho'(r)}{\rho(r)}(g-\dd r\otimes \dd r)
    \leq -g,
\end{align*}
which follows from the comparison theorem \cite[Theorem 2.31]{Kas82} (or a direct ODE comparison in this special case). Therefore, for the warped product case,  the arguments of \cite{GHW21} still work with the weight function \cref{eq. GHW weight function} and would derive a classification result for positive solutions to \cref{eq. equation of u on M} with a \emph{sharp} range for $\lambda$ when $3\leq n\leq 8$ and $1<q\leq \frac{4n}{5n-9}$. We remark that the sharpness of $\lambda$ is illustrated by an existence result on the model space $\mathbb{B}^n$ \cite[Proposition 1]{GHW21} and the sharp value of $\lambda$ determines the sharp constant of the Sobolev trace inequality (see \cite[Corollary 1]{GHW21}).

In contrast, our \cref{thm. C. warped product} emphasizes the role of a closed conformal vector field and uses a different weight function \cref{eq. weight in warped case}. While this approach yields a range for $\lambda$ that is not necessarily sharp, it has the advantage of being applicable in any dimension $n\geq 3$. Moreover, the range for $q$ is sharp when $\rho'(R)=1$.
 This suggests the possibility of achieving a \emph{sharp} range for $\lambda$ in \emph{all} dimensions by merging the two approaches, possibly subject to conditions on
 $q$ and $\rho'(R)$. 
 The case in \cref{thm.C. minimizer} may offer some initial support for this  belief.



\section{Proof of \cref{thm.C. minimizer}}\label{Sec.4}
In this section, we address the classification of minimizers for the Sobolev trace inequality on warped product manifolds and prove \cref{thm.C. minimizer}. Compared to \cref{thm. C. warped product}, the present result combines the minimizing property with a sharp Sobolev inequality on the boundary $\partial M$ to control certain boundary integrals (see Step 3).
\begin{proof}[Proof of \cref{thm.C. minimizer}:]
    \textbf{Step 1:}
        If $\rho'(R)<1$, then $1<q\leq \frac{2(n-1)\rho'(R)-(n-2)}{n-2}<\frac{n}{n-2}$ and the minimizer $u_0$ exists by a classical compactness argument. If $\rho'(R)=1$ and for $q=\frac{n}{n-2}$, then $(M^n,g)=\mathbb{B}^n(R)$ and the minimizer $u_0$ still exists by the concentration compactness principle \cite{Lio85} (see \cite[Page 693]{Esc88}).
        
            W.L.O.G., assume $\int_{\partial M}u_0^{q+1}=1$. Then the minimizer solves the equation
            \begin{align*}
                \begin{cases}
                    \Delta u_0=0 &\text{in }M\\
                    \frac{\partial u_0}{\partial \nu}+\lambda u_0=Q_{q,\lambda}(u_0)u_0^q
                     &\text{on }\partial M.
                \end{cases}
            \end{align*}
            Let $u:=Q_{q,\lambda}(u_0)^{\frac{1}{q-1}}u_0 $, then $u$ solves
            \begin{align*}
                \begin{cases}
                    \Delta u=0&\text{in }M\\
                    \frac{\partial u}{\partial \nu}+\lambda u=u^q
                     &\text{on }\partial M.
                \end{cases}
            \end{align*}
            Since $u_0$ is a minimizer, by competing with the constant function,  we have
            \begin{align*}
                Q_{q,\lambda}(u_0)\leq
                \lambda |\partial M|^{\frac{q-1}{q+1}}.
            \end{align*}
            Combining with $1=\int_{\partial M}u_0^{q+1}            =\int_{\partial\mathbb{B}^n}Q_{q,\lambda}(u_0)^{-\frac{q+1}{q-1}}u^{q+1} $, we derive that
            \begin{align*}
                \int_{\partial M}u^{q+1}
                =Q_{q,\lambda}(u_0)^{\frac{q+1}{q-1}}
                \leq \lambda^{\frac{q+1}{q-1}}
                |\partial M|.
            \end{align*}
            \textbf{Step 2:} Define $d:=\frac{2q}{q-1}$ and let $v:=u^{-\frac{2}{d-2}}$, $\chi:=\frac{\partial v}{\partial \nu}|_{\partial M}, f:=v|_{\partial M} $, then $v$ solves \cref{eq. PDE of v on warped}
        with 
        \begin{align}\label{ineq. integral hypothesis}
            \int_{\partial M}f^{1-d}
            \leq \lambda^{d-1}|\partial M|.
        \end{align}
        As before (see  \cref{eq. warped 1}), there holds the key integral identity:
        \begin{align}\label{eq. 1 in warped case}
            &\int_{M}v^{1-d}\left(
        \left|\nabla^2 v-\frac{\Delta v}{n}g\right|^2
        +\left(\frac{1}{n}-\frac{1}{d}\right)(\Delta v)^2
        +Ric(\nabla v,\nabla v)
        \right)\notag\\
        =&\left(\frac{2}{d-2}\lambda
        -\frac{\rho'(R)}{\rho(R)}\right)
        \int_{\partial M}f^{1-d}|\nabla f|^2
        +\frac{2}{d-2}\lambda
        \int_{\partial M}f^{2-d}
        \left(\nabla^2 v-\frac{\Delta v}{d}g\right)(\nu,\nu)\notag\\
        &-\frac{2}{d-2}
        \int_{\partial M}f^{1-d}
        \left(\nabla^2 v-\frac{\Delta v}{d}g\right)(\nu,\nu).
        \end{align}
        \textbf{Step 3:}
        We tackle the second term in the right hand side of \cref{eq. 1 in warped case}. By \cref{eq. PDE of v on warped} and divergence theorem we have
        \begin{align}
            &\int_{\partial M}f^{2-d}\left(\nabla^2 v-\frac{\Delta v}{d}g\right)(\nu,\nu)
            =\int_{\partial M}f^{2-d}\left(\nabla^2 v(\nu,\nu)-\frac{1}{d}\Delta v\right) \notag\\
            =&\int_{\partial M}
            f^{2-d}\left(\frac{d-1}{d}\Delta v-\Delta f-(n-1)\frac{\rho'(R)}{\rho(R)}\chi\right)\notag\\
            =&\int_{\partial M}
            f^{1-d}\left(\frac{d-1}{2}|\nabla f|^2-f\Delta f+\frac{d-1}{2}\chi^2
            -(n-1)\frac{\rho'(R)}{\rho(R)}f\chi\right)\notag\\
            =&\int_{\partial M}
            f^{1-d}\left(\frac{d-1}{2}|\nabla f|^2-f\Delta f
            -\frac{\lambda}{d-2}(n-1)\frac{\rho'(R)}{\rho(R)}f^2\right)\notag\\
            &+\frac{(d-2)(n-1)}{4\lambda}\frac{\rho'(R)}{\rho(R)}
            \int_{\partial M}
            f^{1-d}\left(\chi-\frac{2}{d-2}\lambda f\right)^2
            +
            \left(\frac{d-1}{2}-\frac{(d-2)(n-1)}{4\lambda}\frac{\rho'(R)}{\rho(R)}\right)
            \int_{\partial M}
            f^{1-d}\chi^2\notag\\
            =&\int_{\partial M}
            f^{1-d}\left(-\frac{d-3}{2}|\nabla f|^2
            -\frac{(n-1)\lambda}{d-2}\frac{\rho'(R)}{\rho(R)}
            f^2\right)
            +\frac{n-1}{(d-2)\lambda}
            \frac{\rho'(R)}{\rho(R)}
            \int_{\partial M} f^{1-d}\notag\\
            &+\left(\frac{d-1}{2}-\frac{(d-2)(n-1)}{4\lambda}\frac{\rho'(R)}{\rho(R)}\right)
            \int_{\partial M}
            f^{1-d}\chi^2\notag\\
            \leq& \int_{\partial M}
            f^{1-d}\left(-\frac{d-3}{2}|\nabla f|^2
            -\frac{(n-1)\lambda}{d-2}\frac{\rho'(R)}{\rho(R)}
            f^2\right)
            +\frac{n-1}{(d-2)\lambda}
            \frac{\rho'(R)}{\rho(R)}
            \int_{\partial M} f^{1-d},\label{ineq. 2-d in warped case}
        \end{align}
        where we used $0<\lambda\leq
        \frac{d-2}{2}\frac{n-1}{d-1}\frac{\rho'(R)}{\rho(R)}
        =\frac{n-1}{q+1}\frac{\rho'(R)}{\rho(R)}$ in the last line.
        
        If $d=3$, we could utilize \cref{ineq. integral hypothesis} to  continue \cref{ineq. 2-d in warped case} as follows
        \begin{align*}
            &\int_{\partial M}
            f^{1-d}\left(-\frac{d-3}{2}|\nabla f|^2
            -\frac{(n-1)\lambda}{d-2}\frac{\rho'(R)}{\rho(R)}
            f^2\right)
            +\frac{n-1}{(d-2)\lambda}
            \frac{\rho'(R)}{\rho(R)}
            \int_{\partial M} f^{1-d}\\
            =&-(n-1)\lambda\frac{\rho'(R)}{\rho(R)}
            |\partial M|
            +\frac{n-1}{\lambda}\frac{\rho'(R)}{\rho(R)}\int_{\partial M}f^{1-d}\\
            \leq &-(n-1)\lambda\frac{\rho'(R)}{\rho(R)}
            |\partial M|
            +(n-1)\lambda^{d-2}\frac{\rho'(R)}{\rho(R)}|\partial M|=0.
        \end{align*}
        If $d>3$, let $\tilde{f}:=f^{\frac{3-d}{2}} $, then we continue \cref{ineq. 2-d in warped case} as follows
        \begin{align*}
            &\int_{\partial M}
            f^{1-d}\left(-\frac{d-3}{2}|\nabla f|^2
            -\frac{(n-1)\lambda}{d-2}\frac{\rho'(R)}{\rho(R)}f^2\right)
            +\frac{n-1}{(d-2)\lambda}
            \frac{\rho'(R)}{\rho(R)}
            \int_{\partial M} f^{1-d}\\
            =&-\frac{2}{d-3}
            \int_{\partial M}
            \left(|\nabla \tilde{f}|^2
            +\frac{\lambda(d-3)(n-1)}{2(d-2)}\frac{\rho'(R)}{\rho(R)}
            \tilde{f}^2\right)
            +\frac{n-1}{(d-2)\lambda}
            \frac{\rho'(R)}{\rho(R)}
            \int_{\partial M} f^{1-d}\\
            \leq &
            -\frac{\lambda(n-1)}{d-2}
            \frac{\rho'(R)}{\rho(R)}
            \left(\int_{\partial M}f^{1-d}\right)^{\frac{d-3}{d-1}}
            |\partial M|^{\frac{2}{d-1}}
            +\frac{n-1}{(d-2)\lambda}
            \frac{\rho'(R)}{\rho(R)}
            \int_{\partial M} f^{1-d}\\
            =&\frac{n-1}{d-2}\frac{\rho'(R)}{\rho(R)}
            \left(\int_{\partial M}f^{1-d}\right)^{\frac{d-3}{d-1}}
            \left(
            -\lambda|\partial M|^{\frac{2}{d-1}}
            +\frac{1}{\lambda}
            \left(\int_{\partial M}
            f^{1-d}\right)^{\frac{2}{d-1}}
            \right)\leq 0,
        \end{align*}
        where in the penultimate line,  we used that $\partial M$ is a round sphere  and the sharp Sobolev inequality on $\mathbb{S}^{n-1}(\rho(R))$ (see \cite[Corollary 6.2]{BVV91}):
        \begin{align*}
            &\int_{\mathbb{S}^{n-1}(\rho(R))}|\nabla u|^2+su^2
            \geq 
            s\left(
            \int_{\mathbb{S}^{n-1}(\rho(R))}|u|^{p+1}
            \right)^{\frac{2}{p+1}}|\mathbb{S}^{n-1}(\rho(R))|^{\frac{p-1}{p+1}},\quad \forall u\in C^{\infty}(\mathbb{S}^{n-1}(\rho(R))),
        \end{align*}
        where $1<p\leq\frac{n+1}{n-3}\mathrm{~and~}0\leq s\leq \frac{n-1}{p-1}$. Here we chose $p=\frac{d+1}{d-3}\leq \frac{n+1}{n-3}$, and $s=\frac{\lambda(d-3)(n-1)}{2(d-2)}\frac{\rho'(R)}{\rho(R)}\leq \frac{(d-3)(n-1)}{4}=\frac{n-1}{p-1}$. Note that we used $\lambda\leq \frac{1}{q-1}\frac{\rho(R)}{\rho'(R)} $. 
        
        In conclusion, we obtain that
        \begin{align*}
            \int_{\partial M}f^{2-d}
            \left(\nabla^2 v-\frac{\Delta v}{d}g\right)(\nu,\nu)
            \leq 0.
        \end{align*}
        \textbf{Step 4:}  We tackle the last term in the right hand side of \cref{eq. 1 in warped case}. Consider the weight function \cref{eq. weight in warped case}.
        Then $w$ satisfies: $w|_{\partial M}=0,\ \nabla w|_{\partial M}=-\rho(R)\nu,\ \Delta w=-n\rho'$, and $\nabla^2 w=\frac{\Delta w}{n}g$.
        
        As in \cref{sec3}, there holds the Pohozaev identity (see \cref{eq. warped 1w_1-d}):
        \begin{align*}
            \rho(R)\int_{\partial M}
            f^{1-d}\left(\nabla^2v-\frac{\Delta v}{d}g\right)(\nu,\nu)
            =\left(1-\frac{n}{d}\right)\int_M 
            \rho'v^{1-d}\Delta v
            -(n-1)\int_M \rho''v^{1-d}\frac{\partial v}{\partial r}.
        \end{align*}
        Combining with \cref{eq. integral trick}, the Pohozaev identity turns out to be
        \begin{align*}
             &\rho(R)\int_{\partial M}
            f^{1-d}
            \left(\nabla^2v-\frac{\Delta v}{d}g\right)
            (\nu,\nu)\notag\\
            =&\int_M \left(-\frac{(n-2)(d-1)}{d}\rho'
            +\frac{(n-1)(d-2)}{d}\rho'(R)\right)
            v^{1-d}\Delta v\\
            \geq&\int_M
            \left(
            -\frac{(n-2)(d-1)}{d}
             +\frac{(n-1)(d-2)}{d}\rho'(R)
            \right)v^{1-d}\Delta v
            \geq 0,
        \end{align*}
where in the last line, we used $\rho'(r)\leq 1$ (see \cref{ineq. rho prime r}), and
 the last inequality is guaranteed by
            \begin{align*}
                d\geq \frac{2(n-1)\rho'(R)-(n-2)}{(n-1)\rho'(R)-(n-2)},
            \end{align*}
             which is exactly our hypothesis on $q$:
            $1<q\leq \frac{2(n-1)\rho'(R)-(n-2)}{n-2}$.

        \textbf{Step 5:} Note that there always holds $\rho'(R)\leq 1$ (see \cref{ineq. rho prime r}).
        
        If $\rho'(R)<1$, then $q\leq \frac{2(n-1)\rho'(R)-(n-2)}{n-2}<\frac{n}{n-2}$. It follows that $d:=\frac{2q}{q-1}>n$. 
        Then by step 3, step 4 and the key integral identity \cref{eq. 1 in warped case}, we get
        \begin{align*}
            0\leq &\int_{M}v^{1-d}\left(
        \left|\nabla^2 v-\frac{\Delta v}{n}g\right|^2
        +\left(\frac{1}{n}-\frac{1}{d}\right)(\Delta v)^2
        +Ric(\nabla v,\nabla v)
        \right)\notag\\
        =&\left(\frac{2}{d-2}\lambda
        -\frac{\rho'(R)}{\rho(R)}\right)
        \int_{\partial M}f^{1-d}|\nabla f|^2
        +\frac{2}{d-2}\lambda
        \int_{\partial M}f^{2-d}
        \left(\nabla^2 v-\frac{\Delta v}{d}g\right)(\nu,\nu)\notag\\
        &-\frac{2}{d-2}
        \int_{\partial M}f^{1-d}
        \left(\nabla^2 v-\frac{\Delta v}{d}g\right)(\nu,\nu)
        \leq 0,
        \end{align*}
        which forces $(\frac{d}{2}v^{-1}|\nabla v|^2)^2=(\Delta v)^2\equiv 0$ since $d>n$. Therefore $v$ is a constant function and so is $u_0$.

        If $\rho'(R)=1$, then $\rho'\equiv 1$ by \cref{ineq. rho prime r}. We get
            $\rho(r)=r,\quad \forall r\in[0,R]$.
        In other words, $(M^n,g)$ is isometric to a round ball of radius $R$. As before, if $R<1$, then step 3, step 4 and the identity \cref{eq. 1 in warped case} yield 
        \begin{align*}
            \int_{\partial M}f^{1-d}|\nabla f|^2=0,
        \end{align*}
        so $v|_{\partial M}=f$ is a constant function. Therefore, $u$ is a harmonic function on $(M^n,g)$ with a constant boundary value. It follows that $u$ is a constant and so is $u_0$.

        Finally, the remaining case is that $(M^n,g)$ is isometric to $\mathbb{B}^n$, on which the solution $u$ is completely classified by \cref{thm. GL25}.
\end{proof}
\textbf{Acknowledgements} This work was completed during a research visit to Michigan State University
. I wish to thank the university for its hospitality and am especially grateful to Prof. Xiaodong Wang and Dr. Zhixin Wang for their helpful discussions. I would also like to thank Prof. Abdolhakim Shouman for drawing my attention to the work \cite{Sho26}.

\textbf{Funding} This work is partially supported by NSFC-12031012, NSFC-12171313 and NSFC-12250710674.

 \textbf{Data Availability} Data sharing not applicable to this article as no datasets were generated or analyzed during
 the current study.
 
 \textbf{Conflicts of Interest}  The author declares that there are no conflicts of interest.
\bibliographystyle{amsplain}
\bibliography{bib}

@article{GL25,
  title={A proof of {G}uo-{W}ang’s conjecture on the uniqueness of positive harmonic functions in the unit ball},
  author={Gu, Pingxin and Li, Haizhong},
  journal={Math. Ann.},
  pages={3501-3517},
volume={391},
number={3},
  year={2025},
  publisher={Springer}
}

@article{GW20,
  title={Uniqueness results for positive harmonic functions on $\mathbb{B}^n$ satisfying a nonlinear boundary condition},
  author={Guo, Qianqiao and Wang, Xiaodong},
  journal={Calc. Var. Partial Differential Equations},
  volume={59},
  number={5},
  pages={8 pp.},
  year={2020},
  publisher={Springer}
}

@article{GHW21,
  title={Liouville type theorems on manifolds with nonnegative curvature and strictly convex boundary},
  author={Guo, Qianqiao and Hang, Fengbo and Wang, Xiaodong},
  journal={Math. Res. Lett.},
  volume={28},
  number={5},
  pages={1419--1439},
  year={2021},
  publisher={International Press, Inc.}
}

@article{Ou24,
  title={A note on the {W}ang’s conjecture for a nonlinear {N}eumann problem on unit ball},
  author={Ou, Qianzhong},
  journal={Proc. Amer. Math. Soc.},
  volume={152},
  number={3},
  pages={1031--1035},
  year={2024}
}

@article{LO23,
  title={Liouville type theorems for positive harmonic functions on the unit ball with a nonlinear boundary condition},
  author={Lin, Daowen and Ou, Qianzhong},
  journal={Calc. Var. Partial Differential Equations},
  volume={62},
  number={1},
  pages={13 pp.},
  year={2023},
  publisher={Springer}
}

@article{Esc90,
  title={Uniqueness theorems on conformal deformation of metrics, {S}obolev inequalities, and an eigenvalue estimate},
  author={Escobar, Jos{\'e} F},
  journal={Comm. Pure Appl. Math.},
  volume={43},
  number={7},
  pages={857--883},
  year={1990},
  publisher={Wiley Online Library}
}

@article{Wan21,
  title={On compact Riemannian manifolds with convex boundary and {R}icci curvature bounded from below},
  author={Wang, Xiaodong},
  journal={J. Geom. Anal.},
  volume={31},
number={4},
  pages={3988--4003},
  year={2021},
  publisher={Springer}
}

@article{BVV91,
  title={Nonlinear elliptic equations on compact {R}iemannian manifolds and asymptotics of {E}mden equations},
  author={Bidaut-V{\'e}ron, Marie-Fran{\c{c}}oise and V{\'e}ron, Laurent},
  journal={Invent. math.},
  volume={106},
  number={1},
  pages={489--539},
  year={1991},
  publisher={Springer}
}

@article{XX24,
  title={Escobar’s conjecture on a sharp lower bound for the first nonzero {S}teklov eigenvalue},
  author={Xia, Chao and Xiong, Changwei},
  journal={Peking Math. J.},
  volume={7},
  number={2},
  pages={759--778},
  year={2024},
  publisher={Springer}
}

@article{Esc88,
  title={Sharp constant in a {S}obolev trace inequality},
  author={Escobar, Jos{\'e} F},
  journal={Indiana Univ. Math. J.},
  volume={37},
  number={3},
  pages={687--698},
  year={1988},
  publisher={JSTOR}
}

@article{Wan22,
  title={Uniqueness results on a geometric {PDE} in {R}iemannian and {CR} geometry revisited},
  author={Wang, Xiaodong},
  journal={Math. Z.},
  volume={301},
  number={2},
  pages={1299--1314},
  year={2022},
  publisher={Springer}
}

@article{Oba71,
  title={The conjectures on conformal transformations of {R}iemannian manifolds},
  author={Obata, Morio},
  journal={J. Differential Geometry},
volume={6},
  year={1971},
pages={247--258}
}

@article{Sch88,
  title={The existence of weak solutions with prescribed singular behavior for a conformally invariant scalar equation},
  author={Schoen, Richard M},
  journal={Comm. Pure Appl. Math.},
  volume={41},
  number={3},
  pages={317--392},
  year={1988},
  publisher={Wiley Online Library}
}

@article{Pay68,
  title={Bounds for the maximum stress in the {S}aint {V}enant torsion problem},
  author={Payne, Lawrence E},
  journal={Indian J. Mech. Math. Special Issue 1968/69, Special Issue, part I},
  pages={51--59},
}

@article{Wei71,
  title={Remark on the preceding paper of Serrin},
  author={Weinberger, Hans F},
  journal={Arch. Rational Mech. Anal.},
  volume={43},
  pages={319--320},
  year={1971},
  publisher={Springer}
}

@article{Dan11,
  title={The Classical Maximum Principle. Some of Its Extensions and Applications.},
  author={Danet, Cristian-Paul},
  journal={Ann. Acad. Rom. Sci. Ser. Math. Appl. },
  volume={3},
number={2},
  pages={273--299},
  year={2011}
}

@article{CFP24,
  title={Classification results, rigidity theorems and semilinear {PDE}s on {R}iemannian manifolds: a {P}-function approach},
  author={Ciraolo, Giulio and Farina, Alberto and Polvara, Camilla Chiara},
  journal={J. Eur. Math. Soc.},
  year={2025}
}

@article{Ili96,
  title={Inegalites de {S}obolev et resultats d'isolement pour les applications harmoniques},
  author={Ilias, Sa{\"\i}d},
  journal={J. Funct. Anal.},
  volume={139},
  number={1},
  pages={182--195},
  year={1996},
  publisher={Academic Press}
}

@article{Wan17,
  title={Uniqueness results on surfaces with boundary},
  author={Wang, Xiaodong},
  journal={Calc. Var. Partial Differential Equations},
  volume={56},
  number={3},
  pages={11 pp.},
  year={2017},
  publisher={Springer}
}

@article{LZ95,
  title={Uniqueness theorems through the method of moving spheres},
  author={Li, Yanyan and Zhu, Meijun},
  year={1995},
journal={Duke Math. J.},
volume={80},
number={2},
pages={383--417}
}

@article{Lio85,
  title={The concentration-compactness principle in the calculus of variations. The limit case, part 2},
  author={Lions, Pierre-Louis},
  journal={Rev. Mat. Iberoamericana},
  volume={1},
  number={2},
  pages={45--121},
  year={1985}
}

@article{Kas82,
  title={A Laplacian comparison theorem and function theoretic properties of a complete Riemannian manifold},
  author={Kasue, Atsushi},
  journal={Japan. J. Math. (N.S.)},
  volume={8},
  number={2},
  pages={309--341},
  year={1982},
  publisher={The Mathematical Society of Japan}
}

@article{CMM12,
  title={On the global structure of conformal gradient solitons with nonnegative Ricci tensor},
  author={Catino, Giovanni and Mantegazza, Carlo and Mazzieri, Lorenzo},
  journal={Commun. Contemp. Math.},
  volume={14},
  number={06},
  pages={12 pp.},
  year={2012},
  publisher={World Scientific}
}

@article{Sho26,
  title={SOBOLEV TRACE INEQUALITY ON MANIFOLDS WITH NON-NEGATIVE RICCI CURVATURE AND CONVEX BOUNDARY},
  author={Shouman, Abdolhakim},
  journal={Proc. Amer. Math. Soc.},
  year={2026}
}
\end{document}